\def\N{\mathbb{N}}
\def\W{\mathbb{W}}
\def\C{\mathbb{C}}
\def\R{\mathbb{R}}
\def\sH{{\sf H}}
\def\sU{{\sf U}}
\def\bra{\langle}
\def\ket{\rangle}
\def\x{\bm{x}}
\def\1{{\bf 1}}
\def\cM{{\cal M}}
\def\cS{{\cal S}}
\def\cP{{\cal P}}
\def\cB{{\cal B}}
\def\cC{{\cal C}}
\def\cR{{\cal R}}
\def\rD{{\rm D}}
\def\rBW{{\rm BW}}
\def\rchD{{\rm chD}}
\def\rw{{\rm w}}
\def\rmm{{\rm m}}
\def\rs{{\rm s}}
\def\rd{{\rm d}}
\def\rk{{\rm k}}
\def\rt{{\rm t}}
\def\dist={\stackrel{\rm d}{=}}
\numberwithin{equation}{section}
\newtheorem{Theorem}{Theorem}[section]
\newtheorem*{Theorem*}{Theorem}
\newtheorem{Lemma}[Theorem]{Lemma}
\newtheorem{Proposition}[Theorem]{Proposition}
 { \theoremstyle{definition}

\newtheorem{Example}[Theorem]{Example}
\newtheorem{Remark}[Theorem]{Remark} }
\begin{document}
\allowdisplaybreaks

\newcommand{\arXivNumber}{2106.00442}

\renewcommand{\thefootnote}{}

\renewcommand{\PaperNumber}{049}

\FirstPageHeading

\ShortArticleName{Functional Equations Solving Initial-Value Problems of Complex Burgers-Type Equations}

\ArticleName{Functional Equations Solving Initial-Value Problems\\ of Complex Burgers-Type Equations\\ for One-Dimensional Log-Gases\footnote{This paper is a~contribution to the Special Issue on Non-Commutative Algebra, Probability and Analysis in Action. The~full collection is available at \href{https://www.emis.de/journals/SIGMA/non-commutative-probability.html}{https://www.emis.de/journals/SIGMA/non-commutative-probability.html}}}

\Author{Taiki ENDO~$^{\rm a}$, Makoto KATORI~$^{\rm a}$ and Noriyoshi SAKUMA~$^{\rm b}$}

\AuthorNameForHeading{T.~Endo, M.~Katori and N.~Sakuma}

\Address{$^{\rm a)}$~Department of Physics, Faculty of Science and Engineering, Chuo University,\\
\hphantom{$^{\rm a)}$}~Kasuga, Bunkyo-ku, Tokyo 112-8551, Japan}
\EmailD{\href{mailto:taiki@phys.chuo-u.ac.jp}{taiki@phys.chuo-u.ac.jp}, \href{mailto:katori@phys.chuo-u.ac.jp}{katori@phys.chuo-u.ac.jp}}

\Address{$^{\rm b)}$~Graduate School of Natural Sciences, Nagoya City University,\\
\hphantom{$^{\rm b)}$}~Mizuho-ku, Nagoya, Aichi 467-8501, Japan}
\EmailD{\href{mailto:sakuma@nsc.nagoya-cu.ac.jp}{sakuma@nsc.nagoya-cu.ac.jp}}

\ArticleDates{Received February 24, 2022, in final form June 23, 2022; Published online July 02, 2022}

\Abstract{We study the hydrodynamic limits of three kinds of one-dimensional stochastic log-gases known as Dyson's Brownian motion model, its chiral version, and the Bru--Wishart process studied in dynamical random matrix theory. We define the measure-valued processes so that their Cauchy transforms solve the complex Burgers-type equations. We show that applications of the method of characteristic curves to these partial differential equations provide the functional equations relating the Cauchy transforms of measures at an arbitrary time with those at the initial time. We transform the functional equations for the Cauchy transforms to those for the $R$-transforms and the $S$-transforms of the measures, which play central roles in free probability theory. The obtained functional equations for the $R$-transforms and the $S$-transforms are simpler than those for the Cauchy transforms and useful for explicit calculations including the computation of free cumulant sequences. Some of the results are argued using the notion of free convolutions.}

\Keywords{stochastic log-gases; complex Burgers-type equations; functional equations; Cauchy transforms; $R$-transforms; $S$-transforms; free probability and free convolutions}

\Classification{82C22; 60B20; 44A15; 46L54}

\renewcommand{\thefootnote}{\arabic{footnote}}
\setcounter{footnote}{0}

\section{Introduction and results}

\subsection{Transformations of measures and complex Burgers-type equations}

Among a variety of recent developments in
random matrix theory \cite{ABD11,AGZ10,For10,Meh04},
we study in this paper an intersection of two important topics;
time-dependent random matrix models
\cite{Bru89,Dys62,Kat16,KT04,KO01}
and free probability theory
\cite{BV93,Bia97,Bia97b,MS17,NS06,Voi86,VDN92}.

The cases of the Gaussian unitary ensemble (GUE)
and the chiral GUE are typical eigenvalue distributions
on $\R$ of Hermitian random matrices, and
their dynamical extensions are described by
systems of stochastic differential equations (SDEs)
called
\textit{Dyson's Brownian motion model}~\cite{Dys62}
and its chiral version \cite{Bru91,KT04,KO01}.
(Compare the SDEs (\ref{eqn:Wishart2}) for the chiral version
with (\ref{eqn:Dyson1}) for the original one.
The two terms in the parentheses in the last
term of the r.h.s.\ in (\ref{eqn:Wishart2}) are
\textit{chiral} to each other.)

{\samepage
For $S \subset \R$, let $\cP^0(S)$ be a set of all Borel
probability measures on $S$
with bounded supports
equipped with the weak topology.
For an arbitrary but fixed
$T > 0$, $\cC\big([0, T] \to \cP^0(S)\big)$ denotes the
space of continuous processes defined in the time period $[0, T]$
realized in $\cP^0(S)$.

}

In this manuscript,
we consider the \textit{hydrodynamic limits} of
Dyson's Brownian motion model and
its chiral version with
an additional parameter $\lambda \in \R_+ :=[0, \infty)$
as elements of $\cC\big([0, \infty) \allowbreak\to \cP^0(\R)\big)$,
which are denoted by $(\rw_t)_{t \geq 0}$
and $(\rw_{\lambda, t})_{t \geq 0}$, respectively.
First, we assume that the initial probability measure $\rw_0$
and $\rw_{\lambda, 0}$ are in $\cP^0(\R)$
and the \textit{Cauchy transforms} of measures,
\begin{gather}
G_{\mu}(z) := \int_S \frac{\mu({\rm d}x)}{z-x},
\qquad z \in \C^+ :=\{z \in \C\colon \operatorname{Im} z > 0\},
\label{eqn:Cauchy1}
\end{gather}
are well defined satisfying the condition
$\lim_{y \uparrow \infty}
\sqrt{-1} y G_{\mu}\big(\sqrt{-1} y\big)=1$
for $\mu=\rw_t$ and $\rw_{\lambda, t}$, $t \geq 0$.
It is known that \cite{BN08,BNW13,BNW14,Neu08a,Neu08b,RS93}
given $G_{\rw_0}$ and $G_{\rw_{\lambda, 0}}$
obtained by $\rw_0$ and $\rw_{\lambda,0}$, respectively,
$(G_{\rw_t})_{t \geq 0}$ and $(G_{\rw_{\lambda, t}})_{t \geq 0}$
are uniquely determined by the solutions
of the following partial differential equations (PDEs):
\begin{gather}
\frac{\partial G_{\rw_t}(z)}{\partial t}
+G_{\rw_t}(z) \frac{\partial G_{\rw_t}(z)}{\partial z}=0, \qquad
t \geq 0,\label{eqn:Gwt}
\\
\frac{\partial G_{\rw_{\lambda,t}}(z)}{\partial t}
+\bigg( G_{\rw_{\lambda,t}}(z) - \frac{1-\lambda}{2z} \bigg)
\frac{\partial G_{\rw_{\lambda,t}}(z)}{\partial z}
+\frac{1-\lambda}{2z^2} G_{\rw_{\lambda,t}}(z)=0, \qquad
t \geq 0.\label{eqn:Gm1/2}
\end{gather}
Equation (\ref{eqn:Gwt}) is known as
the \textit{complex Burgers equation in the inviscid limit}.
It is obvious that when $\lambda=1$ (\ref{eqn:Gm1/2}) is
reduced to (\ref{eqn:Gwt}) and hence,
if $\rw_0 = \rw_{1, 0}$, then
\begin{gather}
\rw_t = \rw_{1, t}, \qquad t \geq 0.
\label{eqn:rw_1t}
\end{gather}
In other words, the process $(\rw_{\lambda, t})_{t \geq 0}$ is
the one-parameter ($\lambda \in \R_+$) extension
of $(\rw_t)_{t \geq 0}$.

We denote by $\1_{(E)}$ the indicator function
of an event $E$;
$\1_{(E)}=1$ if $E$ occurs,
and $\1_{(E)}=0$ otherwise.
As a special case, Kronecker's delta is defined by
$\delta^{ij}:=\1_{(i=j)}$, $i, j \in \N$.
Moreover, the $\sigma$-algebra
of Borel sets on $S$ is denoted by $\cB(S)$
and we define $1_B(x) := \1_{(x \in B)}$
for $B \in \cB(S)$.

For $p>0$ and $\mu \in \cP^0(\R)$, the $p$-th push-forward
measure $\mu^{(p)}$ is defined by \cite{PAS12}
\begin{gather}
\mu^{(p)}(B) :=\int_{\R} 1_{B}(|x|^p) \mu({\rm d}x),
\qquad B \in \cB((0, \infty)).
\label{eqn:mu_p}
\end{gather}
We define
\begin{gather}
\rmm_{\lambda, t} := \rw_{\lambda, t}^{(2)}
\in \cP^0(\R_+), \qquad t \geq 0,
\label{eqn:def_rmm}
\end{gather}
provided the matching of initial measures
$\rmm_{\lambda, 0}=\rw_{\lambda, 0}^{(2)}$.
Note that combining this definition with~(\ref{eqn:rw_1t}) we have
the equality
\begin{gather}
\rw_t^{(2)}=\rmm_{1,t}, \qquad t \geq 0.
\label{eqn:rw_t2_rm1t}
\end{gather}
We can show that by (\ref{eqn:def_rmm})
the PDE (\ref{eqn:Gm1/2}) is transformed
to the following equation for the Cauchy transform
$G_{\rmm_{\lambda, t}}(z)$ of
$\rmm_{\lambda, t}$, $\lambda \in \R_+$,
\begin{gather}
\frac{\partial G_{\rmm_{\lambda,t}}(z)}{\partial t}
+ \big\{2z G_{\rmm_{\lambda,t}}(z) - (1-\lambda) \big\}
\frac{\partial G_{\rmm_{\lambda,t}}(z)}{\partial z}
+G_{\rmm_{\lambda,t}}(z)^2 =0,
\qquad t \geq 0.
\label{eqn:Gm}
\end{gather}
It should be noted that this PDE
is obtained when we consider the hydrodynamic limit of
the system of SDEs \cite{CDG01} known as
the \textit{Bru--Wishart process}
in multivariate stochastic calculus \cite{Bru91,KT04}
and as the \textit{Laguerre process}
in dynamical random matrix theory \cite{KO01}.
The PDEs (\ref{eqn:Gm1/2}) and (\ref{eqn:Gm}) describe
the large-number limits of colors of the systems
with the \textit{chiral symmetry}
in the \textit{quantum chromodynamics} (QCD)
in high energy physics
\cite{BN08,BNW13b,JNPZ99,LWZ16,Neu08a,Neu08b,SV93,Ver94,VZ93}.
(See also \cite{ABD11,For10}.)
We call (\ref{eqn:Gm1/2}) and (\ref{eqn:Gm})
\textit{complex Burgers-type equations}
\cite{BNW13,BNW14,EK20,FG16},
in which drift terms are modified and
``external-force terms'' are added compared with
(\ref{eqn:Gwt}).
(See Section \ref{sec:hydro} below.)

The simplest initial probability measure
in $\cP^0(S)$, $S=\R$ or $\R_+$, is
the single delta measure~$\delta_0$ at the origin.
We regard the solution of the complex
Burgers-type equation starting from
$G_{\delta_0}(z)=1/z$ as the \textit{fundamental solution}
and denote the obtained measure-valued process
as $\big(\mu_t^0\big)_{t \geq 0}$ with a superscript 0.
We can show that (see, for instance, \cite{MS17})
\begin{gather*}
G_{\rw^0_t}(z)=\frac{1}{2t} \Big[z - \sqrt{z^2-4t}\Big], \qquad t \geq 0,
\\
G_{\rmm^0_{\lambda,t}}(z) =
\frac{1}{2tz} \Big[z \!+t(1\!-\lambda) \!-\sqrt{(z\!-x^+_{\lambda,t})(z\!-x^-_{\lambda,t})}\,\Big]
\qquad \text{with}\quad
x^{\pm}_{\lambda,t}= t \big(1 \pm \sqrt{\lambda}\big)^2, \quad t \geq 0,
\end{gather*}
and they determine the time-dependent measures
for $t \geq 0$ as
\begin{gather}
\rw^0_t({\rm d}x) =
\frac{1}{2 \pi t} \sqrt{4t-x^2}
\, 1_{[-2 \sqrt{t}, 2 \sqrt{t}]}(x)\, {\rm d}x,
\label{eqn:mu_B0_1}
\\
\rmm^0_{\lambda,t}({\rm d}x) =
\max(0, 1-\lambda) \delta_0({\rm d}x)+
\frac{1}{2 \pi t x} \sqrt{(x-x^{-}_{\lambda,t})
(x^{+}_{\lambda,t}-x)}
\, 1_{[x^{-}_{\lambda,t}, x^{+}_{\lambda,t}]}(x)\, {\rm d}x,
\label{eqn:mu_W0_1}
\end{gather}
respectively.
The measure (\ref{eqn:mu_B0_1})
is known as the centered \textit{Wigner's semicircle distribution}
with variance $t$
and (\ref{eqn:mu_W0_1}) is as
the \textit{two-parametric Marcenko--Pastur distribution}
with parameters $\lambda$ and $t$ \cite{BNW13,BNW14,EK20}.
For the fundamental solutions explicitly given by (\ref{eqn:mu_B0_1})
and (\ref{eqn:mu_W0_1}),
it is easy to verify the well-known equality
$\big(\rw_t^0\big)^{(2)}=\rmm^0_{1,t}$, $t \geq 0$.
The equality~(\ref{eqn:rw_t2_rm1t}) mentioned above
generalizes it for any initial probability measure
with bounded support satisfying
$\rw_0^{(2)}=\rmm_{1, 0}$.

Let $\cP^0_{\rs}(\R)$ be the set of all symmetric Borel
probability measures on $\R$
(i.e., $\mu(B)=\mu(-B)$, $B \in \cB((0, \infty))$
for $\mu \in \cP^0_{\rs}(\R)$)
and define the
\textit{symmetric Bernoulli delta measure}
with displacement $2a >0$ as
\begin{gather}
\rd_{a} := \frac{1}{2} (\delta_{-a}+\delta_a)
\in \cP^0_{\rs}(\R).
\label{eqn:da}
\end{gather}
The processes $(\rw_t)_{t \geq 0}$
and $(\rw_{\lambda, t})_{t \geq 0}$
starting from $\rd_a$, which are defined
as the time-dependent probability measures
so that their Cauchy transforms solve
the PDEs (\ref{eqn:Gwt}) and (\ref{eqn:Gm1/2})
under the initial condition $G_{\rd_a}(z)=z/\big(z^2-a^2\big)$,
were reported in \cite{AK16,Nad11,War14}
and in \cite{EK20}, respectively.
These solutions show a dynamical phase transition, in which
the positive parameter $a$ controls the transition observed at a~critical time.
The singularity associated with this phase transition
is very interesting and important,
since it gives the mean-field description
of the spontaneous chiral symmetry breaking \cite{EK20}.
But, due to this singularity, the solutions are
much more complicated compared with
the fundamental solutions (\ref{eqn:mu_B0_1})
and $(\rw_{\lambda, t}^0)_{t \geq 0}$ obtained from
(\ref{eqn:mu_W0_1}) by (\ref{eqn:def_rmm})
(see \cite[Remark~3]{EK20}).
It had seemed to be difficult
to argue general properties of these measure-valued processes.
(See \cite{GMS21} for the exact solutions for other initial
probability measures.)

In the present paper, we solve the
\textit{initial-value problem}
for these measure-valued processes
$(\mu_t)_{t \geq 0}=(\rw_t)_{t \geq 0}$,
$(\rw_{\lambda, t})_{t \geq 0}$, and
$(\rmm_{\lambda, t})_{t \geq 0}$.
We found that, by the method of characteristic curves,
each of the PDEs of the Cauchy transforms
$(G_{\mu_t})_{t \geq 0}$, given by
(\ref{eqn:Gwt}), (\ref{eqn:Gm1/2}), and (\ref{eqn:Gm}),
is transformed to a \textit{functional equation}
which relates $G_{\mu_t}(z)$ at an arbitrary time $t > 0$
with the initial function $G_{\mu_0}(z)$.
The result for $(G_{\rw_t})_{t \geq 0}$ given as
Proposition \ref{thm:GD}$(i)$
is well known and found in literature.
The results for $(G_{\rmm_{\lambda, t}})_{t \geq 0}$
and $(G_{\rw_{\lambda, t}})_{t \geq 0}$ given as
Proposition \ref{thm:GD}$(ii)$ and~$(iii)$, respectively, are new,
but they are complicated and do not seem to be useful
for explicit calculations.
On the other hand, in free probability theory
\cite{AHS13,BV93,Bia97,Bia97b,HP00,MS17,NS06,PAS12,Voi86},
we learn other importance transformations
of probability measures different from the Cauchy transform;
the $R$-transform and the $S$-transform.
They define new types of convolutions of
probability measures called
\textit{free convolutions}.
(A brief review is given shortly.)
We applied these transformations
to our functional equations.
The results for the $R$-transforms denoted by $(R_{\mu_t})_{t \geq 0}$
and the $S$-transforms by $(S_{\mu_t})_{t \geq 0}$
are given by Theorems \ref{thm:main}
and \ref{thm:main2}, respectively.
The obtained functional equations are
expressed using the $R$-transforms and the $S$-transforms
of the fundamental solutions and much simplified.
In particular, as shown in Theorem \ref{thm:main}$(i)$,
the functional equation (\ref{eqn:Burgers_R_1}) implies
the decomposition formula (\ref{eqn:Burgers_R_2})
with respect to the free additive convolution
(see also Remarks~\ref{Remark1} and \ref{Remark2}).
Theorem \ref{thm:main2}$(i)$ gives the simple but
useful relationship among $(\rw_{\lambda, t})_{t \geq 0}$,
$(\rmm_{\lambda, t})_{t \geq 0}$, and $\rd$
(see Remark~\ref{Remark4}).
The functional equations are also useful
for explicit calculations
as demonstrated by
Propositions \ref{thm:wa} and \ref{thm:ma}.
There the solutions for the processes
$(\rw_{\lambda, t})_{t \geq 0}$ starting from
$\rd_a$, $a >0$ and $(\rmm_{\lambda, t})_{t \geq 0}$
from $\delta_b$, $b >0$ are shown,
which are much simpler than the corresponding
solutions of the Cauchy transformations
reported in \cite{AK16,Nad11,War14} and
\cite{EK20}.
Comparing the results for $(\rw_{\lambda, t})_{t \geq 0}$
and those for $(\rmm_{\lambda, t})_{t \geq 0}$,
the latter is simpler than the former.
In the original complex Burgers-type equations,
it is obvious at the PDE (\ref{eqn:Gm1/2})
for $(\rw_{\lambda, t})_{t \geq 0}$ that $(\rw_{\lambda,t})_{t \geq 0}$
is a one-parameter extension $(\lambda \in \R_+)$
of $(\rw_t)_{t \geq 0}$, but it is not at the PDE
(\ref{eqn:Gm}) for $(\rmm_{\lambda, t})_{t \geq 0}$.
Our results shows, however, that the transformation from
$(\rw_{\lambda, t})_{t \geq 0}$ to $(\rmm_{\lambda,t})_{t \geq 0}$
by the second push-forward transform
is effective to make practical problems be simpler
and solvable.
Further study of the $p$-th push-forward transforms
(\ref{eqn:mu_p}) will be important.

Given a probability measure
$\mu \in \cP^0(S)$, let
$\tau_n(\mu)$ represent
the $n$-th moment of $\mu$,
$n \in \N$ and define the \textit{moment generating function} by
\begin{gather}
\Psi_{\mu}(z) := \int_S \frac{xz}{1-xz} \mu({\rm d}x)
=\sum_{n=1}^{\infty} \tau_n(\mu) z^n.
\label{eqn:Psi2}
\end{gather}
The Cauchy transform $G_{\mu}$
of $\mu$ is related to $\Psi_{\mu}$ by
\begin{gather}
\Psi_{\mu}(z)=\frac{G_{\mu}(1/z)}{z}-1 \Longleftrightarrow
G_{\mu}(1/z)=z (\Psi_{\mu}(z)+1).
\label{eqn:Psi1}
\end{gather}
We write the inverse function of $G_{\mu}$ as
$G_{\mu}^{\bra -1 \ket}$.
The \textit{$R$-transform} of
$\mu$ is then defined by
\begin{gather}
R_{\mu}(z) := z G_{\mu}^{\bra -1 \ket}(z)-1
\Longleftrightarrow
G_{\mu}^{\bra -1 \ket}(z)=\frac{R_{\mu}(z)+1}{z}.
\label{eqn:R1}
\end{gather}
This is the generating function of
\textit{free cumulants} $\kappa_n=\kappa_n(\mu)$,
$n \in \N$,
\begin{gather*}
R_{\mu}(z)=\sum_{n=1}^{\infty} \kappa_n(\mu) z^n.
\end{gather*}
(Notice that in the free probability literature
the function $\cR_{\mu}(z):=R_{\mu}(z)/z$
is also used and referred to as ``the $R$-transform
of $\mu$''. See, for instance, \cite[equation~(16.8)]{NS06}.)
The relations between the moments $\{\tau_n(\mu)\}_{n \in \N}$
in (\ref{eqn:Psi2}) and the
free cumulants $\{\kappa_n(\mu)\}_{n \in \N}$ are given by
\begin{gather*}
\kappa_1(\mu) = \tau_1(\mu),
\\
\kappa_2(\mu)= \tau_2(\mu) - \tau_1(\mu)^2,
\\
\kappa_3(\mu) = \tau_3(\mu)-3 \tau_2(\mu) \tau_1(\mu)+ 2 \tau_1(\mu)^3,
\\
\kappa_4(\mu) = \tau_4(\mu) - 4 \tau_3(\mu) \tau_1(\mu)-2 \tau_2(\mu)^2
+ 10 \tau_2(\mu) \tau_1(\mu)^2-5 \tau_1(\mu)^4,
\\
\dots\dots\dots\dots,
\end{gather*}
which are generally different from the relations
satisfied in the
``classical probability theory''.
For two probability measures
$\mu$, $\nu$,
the \textit{free additive convolution} of
them is denoted by
$\mu \boxplus \nu$ and defined by \cite{BV93}
\begin{gather*}
R_{\mu \boxplus \nu}(z)=R_{\mu}(z)+R_{\nu}(z),
\end{gather*}
which implies\vspace{-.5ex}
\begin{gather*}
\kappa_{n}(\mu \boxplus \nu)= \kappa_{n}(\mu) + \kappa_{n}(\nu),
\qquad n \in \N.
\end{gather*}
For $\mu \in \cP^0(\R_+)$
with $\mu(\{0\}) < 1$,
the moment
generating function $\Psi_{\mu}(z)$ defined by
(\ref{eqn:Psi1}) has a unique inverse\vspace{-.5ex}
\begin{gather}
\chi_{\mu}(z) := \Psi_{\mu}^{\bra -1 \ket}(z)
\label{eqn:chi_Psi1}
\end{gather}
on the left-half plane $\sqrt{-1} \C^+$ \cite{BV93}.
In this case
the \textit{$S$-transform} of $\mu$ is
defined by\vspace{-.5ex}
\begin{gather}
S_{\mu}(z) :=\frac{1+z}{z} \chi_{\mu}(z),
\qquad z \in \Psi_{\mu}\big(\sqrt{-1} \C^+\big).
\label{eqn:S_def1}
\end{gather}
For two probability measures $\mu$, $\nu$
having $S$-transforms,
the \textit{free multiplicative convolution} \cite{BV93}
is defined as
the the probability measure $\mu \boxtimes \nu$
such that\vspace{-.5ex}
\begin{gather*}
S_{\mu \boxtimes \nu}(z)
=S_{\mu}(z) S_{\nu}(z)
\end{gather*}
for $z$ in a common region of
$\Psi_{\mu}\big(\sqrt{-1} \C^{+}\big) \cup \Psi_{\nu}\big(\sqrt{-1} \C^+\big)$.

The definition of the $S$-transform can be extended
for symmetric probability measures
$\mu \in \cP^0_{\rs}(\R)$ \cite{BV93,PAS12}.
When $\mu \in \cP^0_{\rs}(\R)$ with $\mu(\{0\}) < 1$,
$\Psi_{\mu}(z)$ has a unique inverse on
$H:=\{z \in \C^{-}\colon |\operatorname{Re} z| < |\operatorname{Im} z|\}$,
$\chi_{\mu} \colon \Psi_{\mu}(H) \to H$ and
a unique inverse on
$\widetilde{H}:=\{z \in \C^{+}\colon |\operatorname{Re} z| < |\operatorname{Im} z|\}$,
$\widetilde{\chi}_{\mu} \colon \Psi_{\mu}(\widetilde{H}) \to \widetilde{H}$,
where $\C^- :=\{z \in \C\colon \operatorname{Im} z <0\}$.
Therefore, there are two $S$-transforms for $\mu$
given by\vspace{-.5ex}
\begin{gather*}
S_{\mu}(z)=\frac{1+z}{z} \chi_{\mu}(z)
\qquad \text{and} \qquad
\widetilde{S}_{\mu}(z)=\frac{1+z}{z} \widetilde{\chi}_{\mu}(z),
\end{gather*}
and they satisfy\vspace{-.5ex}
\begin{gather*}
S_{\mu}(z)^2 = \frac{1+z}{z} S_{\mu^{(2)}}(z)
\qquad \text{and} \qquad
\widetilde{S}_{\mu}(z)^2 = \frac{1+z}{z} S_{\mu^{(2)}}(z).
\end{gather*}
It is known that
for a probability measure $\mu \in \cP^0(\R_+)$,
there exists a unique symmetric probability measure
$\mu^{\bf s} \in \cP^0_{\rs}(\R)$ such that\vspace{-.5ex}
\begin{gather*}
\int_{\R} f\big(x^{2}\big) \mu^{\bf s}(\rd x) = \int_{\R} f(x) \mu^{(1/2)}(\rd x)
\end{gather*}
for every compactly supported continuous function $f$.
The probability measure $\mu^{\bf s}$ is called
\textit{symmetrization} of a probability measure $\mu$.
For details, see \cite[p.~134]{HiMo2010}.
We note that the $S$-transform of
$\rd_a$ defined by (\ref{eqn:da}) is given by
\begin{gather}
S_{\rd_a}(z)=\frac{1}{a} \sqrt{\frac{1+z}{z}},
\qquad a > 0.
\label{eqn:Sda}
\end{gather}
If the parameter $a=1$, we use the notation
$\rd$ for $\rd_1$ \cite{PAS12}.

The PDFs (\ref{eqn:Gwt}), (\ref{eqn:Gm1/2})
and (\ref{eqn:Gm}) for the Cauchy transforms
$(G_{\mu_t})_{t \geq 0}$
are transforms as follows:
For $(R_{\mu_t})_{t \geq 0}$,
\begin{gather*}
\frac{\partial R_{\rw_t}(z)}{\partial t} - z^2 =0,
\\
\frac{\partial R_{\rw_{\lambda, t}}(z)}{\partial t}
- \frac{(1-\lambda)z^3}{2 (R_{\rw_{\lambda, t}}(z)+1)^2}
\frac{\partial R_{\rw_{\lambda, t}}(z)}{\partial z}-z^2 +
\frac{(1-\lambda) z^2}{R_{\rw_{\lambda, t}}(z)+1}=0,
\\
\frac{\partial R_{\rmm_{\lambda, t}}(z)}{\partial t}-z^2
\frac{\partial R_{\rmm_{\lambda, t}}(z)}{\partial z}
-z (R_{\rmm_{\lambda, t}}(z)+\lambda)=0,
\end{gather*}
and for $(S_{\mu_t})_{t \geq 0}$,
\begin{gather*}
\frac{\partial S_{\rw_t}(z)}{\partial t}
+z^2 S_{\rw_t}(z)^2 \frac{\partial S_{\rw_t}(z)}{\partial z}
+z S_{\rw_t}(z)^3=0,
\\
\frac{\partial S_{\rw_{\lambda,t}}(z)}{\partial t}
+z^2 \bigg(1-\frac{1-\lambda}{1+z} \bigg)S_{\rw_{\lambda, t}}(z)^2
\frac{\partial S_{\rw_{\lambda, t}}(z)}{\partial z}
+z \bigg\{1 - \frac{(1-\lambda) (z+2)}{2(1+z)^2}\bigg\}
S_{\rw_{\lambda, t}}(z)^3=0,
\\
\frac{\partial S_{\rmm_{\lambda,t}}(z)}{\partial t}
+z(z+\lambda) S_{\rmm_{\lambda, t}}(z)
\frac{\partial S_{\rmm_{\lambda, t}}(z)}{\partial z}+(2z+\lambda)
S_{\rmm_{\lambda, t}}(z)^2=0.
\end{gather*}
We find that the equation of $(R_{\rw_t})_{t \geq 0}$ is
extremely simple and it corresponds to the asymptotic freeness
of a random matrix in the GUE and
a deterministic Hermitian matrix
(see Theorem~\ref{thm:main}$(i)$, Remarks~\ref{Remark1} and \ref{Remark2} below).
On the other hand, the ``external-force terms'' include
both of the coordinate $z$ and the ``fields'' in other equations
and seem to be more complicated than the equation
(\ref{eqn:Gm}) for $(G_{\rmm_{\lambda, t}})_{t \geq 0}$
whose external-force term
is simply given by $G_{m_{\lambda, t}}(z)^2$.
Theorems \ref{thm:main} and \ref{thm:main2} given below
solve the initial-value problems for all of them.

\subsection[Results for the R-transforms]{Results for the $\boldsymbol R$-transforms}
%\label{sec:result_R}

The following are the results
for the $R$-transforms.

\begin{Theorem}\label{thm:main}\quad
\begin{enumerate}\itemsep=0pt
\item[$(i)$]
Assume that $\rw_0 \in \cP^0(\R)$. Then,
\begin{gather}
R_{\rw_t}(z)=R_{\rw_0}(z)+R_{\rw^0_t}(z), \qquad t \geq 0,
\label{eqn:Burgers_R_1}
\end{gather}
where
$R_{\rw_t^0}(z)=t z^2$, $t \geq 0$.
It means that
$(\rw_t)_{t \geq 0}$ is decomposed into
the initial probability measure $\rw_0$
and the fundamental solution
$\big(\rw_t^0\big)_{t \geq 0}$ with respect to the
free additive convolution as follows,
\begin{gather}
\rw_t=\rw_0 \boxplus \rw^0_t, \qquad
t \geq 0.
\label{eqn:Burgers_R_2}
\end{gather}
\item[$(ii)$]
Assume that $\rmm_{\lambda, 0} \in \cP^0(\R_+)$,
$\lambda \in \R_+$. Then,
\begin{gather}
R_{\rmm_{\lambda, t}}(z)
=\frac{1}{1-t z} R_{\rmm_{\lambda, 0}} \bigg( \frac{z}{1-tz} \bigg)
+ R_{\rmm^0_{\lambda, t}}(z),
\qquad t \geq 0,
\label{eqn:W_Burgers_R_1}
\end{gather}
where
$R_{\rmm_{\lambda, t}^0}(z)=\lambda t z/(1-tz)$,
$t \geq 0$.
\item[$(iii)$]
Assume that $\rw_{\lambda, 0} \in \cP^0_{\rs}(\R)$,
$\lambda \in \R_+$. Then,
\begin{gather}
R_{\rw_{\lambda, t}}(z)
+ \frac{(1-\lambda) t z^2}{R_{\rw_{\lambda, t}}(z)+1}\nonumber
\\ \qquad
{}= R_{\rw_{\lambda, t}^0}(z)
+ \frac{(1-\lambda) t z^2}{R_{\rw_{\lambda, t}^0}(z)+1}\nonumber
\\
 \qquad \hphantom{=}
{}+R_{\rw_{\lambda, 0}} \bigg(
z \sqrt{ 1 \!- (1\!-\lambda)
\bigg\{ \frac{1}{R_{\rw_{\lambda, t}}(z)\!+1}
-\frac{1}{R_{\rw_{\lambda, t}}(z)\!+1\!-tz^2} \bigg\}
}\bigg),\quad t \geq 0,\!\!
\label{eqn:chiral_Burgers_R_1}
\end{gather}
where
$R_{\rw_{\lambda, t}^0}(z)
=\big\{{-}1+tz^2+\sqrt{1+2(2\lambda-1)tz^2+t^2 z^4}\big\}/2$.
\end{enumerate}
\end{Theorem}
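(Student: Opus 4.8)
The plan is to integrate, by the method of characteristic curves, the three first-order PDEs for the $R$-transforms displayed just above (which follow from the Cauchy-transform PDEs via the relation~(\ref{eqn:R1}) between the $R$-transform and the inverse Cauchy transform), using throughout that each fundamental solution starts from $\delta_0$, whose Cauchy transform $G_{\delta_0}(z)=1/z$ yields $R_{\delta_0}\equiv0$. Part~$(i)$ is immediate: the PDE collapses to $\partial_t R_{\rw_t}(z)=z^2$, so integrating in $t$ at fixed $z$ gives $R_{\rw_t}(z)=R_{\rw_0}(z)+tz^2$, and setting $\rw_0=\delta_0$ identifies $R_{\rw_t^0}(z)=tz^2$ (equivalently, invert the semicircle Cauchy transform). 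Since the $R$-transform linearises free additive convolution, the relation $R_{\rw_t}=R_{\rw_0}+R_{\rw_t^0}$ is exactly the decomposition~(\ref{eqn:Burgers_R_2}).

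For part~$(ii)$ I would treat $\partial_t R-z^2\partial_z R-z(R+\lambda)=0$ as a quasilinear equation with characteristic system $\dot z=-z^2$, $\dot R=z(R+\lambda)$. The first ODE integrates to $1/z=t+1/z_0$, i.e.\ $z_0=z/(1-tz)$, which is exactly the argument occurring in~(\ref{eqn:W_Burgers_R_1}); along the curve $1+tz_0=1/(1-tz)$. Feeding $z(t)=z_0/(1+tz_0)$ into the second ODE gives a linear equation whose solution is $R+\lambda=(R_0+\lambda)(1+tz_0)=(R_0+\lambda)/(1-tz)$. Back-tracing to $t=0$ with $R_0=R_{\rmm_{\lambda,0}}(z_0)$ and simplifying reproduces~(\ref{eqn:W_Burgers_R_1}), while $R_0=0$ yields $R_{\rmm_{\lambda,t}^0}(z)=\lambda tz/(1-tz)$.

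Part~$(iii)$ is the genuine obstacle, because the coefficients of the PDE for $R_{\rw_{\lambda,t}}$ depend on $R$ itself, so the characteristic system $\dot z=-\tfrac12(1-\lambda)z^3/(R+1)^2$, $\dot R=z^2-(1-\lambda)z^2/(R+1)$ is fully coupled and nonlinear. The crux is to find its first integrals. Eliminating $t$ gives a separable equation $\mathrm{d}(R+1)/\mathrm{d}z=-2(R+1)(R+1-(1-\lambda))/((1-\lambda)z)$, whose integration shows that $I:=z^2\big(1-(1-\lambda)/(R+1)\big)$ is conserved along characteristics. Writing $u:=R+1$, one then finds the decisive simplification $\dot u=I$, so $u$ is linear in time, $u(t)=u_0+It$. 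Together these give $u_0=u-tz^2+(1-\lambda)tz^2/u$ and, after using $u_0-(1-\lambda)=(u-tz^2)(u-(1-\lambda))/u$, the back-traced point $z_0^2=z^2u_0/(u-tz^2)=z^2\big(1-(1-\lambda)\{1/u-1/(u-tz^2)\}\big)$, which is precisely the squared square-root argument appearing in~(\ref{eqn:chiral_Burgers_R_1}).

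It remains to recognise the functional equation inside these invariants. The combination $R_{\rw_{\lambda,t}}(z)+(1-\lambda)tz^2/(R_{\rw_{\lambda,t}}(z)+1)=u-1+(1-\lambda)tz^2/u$ equals $R_0+tz^2$ by the formula for $u_0$, so the left-hand side of~(\ref{eqn:chiral_Burgers_R_1}) is $R_{\rw_{\lambda,0}}(z_0)+tz^2$. Applying the same identity to the fundamental solution (initial data $\delta_0$, $u_0=1$) shows that $u^0$ solves $u^2-(1+tz^2)u+(1-\lambda)tz^2=0$, which gives the stated $R_{\rw_{\lambda,t}^0}$ and the value $R_{\rw_{\lambda,t}^0}(z)+(1-\lambda)tz^2/(R_{\rw_{\lambda,t}^0}(z)+1)=tz^2$; hence the first two terms on the right of~(\ref{eqn:chiral_Burgers_R_1}) sum to $tz^2$. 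Matching the two sides then reduces the identity to $R_{\rw_{\lambda,0}}(z_0)=R_{\rw_{\lambda,0}}(\text{argument})$, which holds because the argument was shown above to equal $z_0$. I expect the only real labour to be this final reconciliation of the two conserved quantities with the square-root expression; the rest is routine integration of the characteristic ODEs and bookkeeping.
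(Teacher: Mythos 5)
Your proposal is correct, and all the key computational claims check out (the conservation of $I=z^2\bigl(1-(1-\lambda)/(R+1)\bigr)$ along characteristics, the identity $\dot u=I$ giving $u=u_0+It$, and the algebra identifying $z_0^2$ with the squared argument in~(\ref{eqn:chiral_Burgers_R_1})). However, your route is genuinely different from the paper's. The paper never integrates the $R$-transform PDEs: it first applies the method of characteristics to the \emph{Cauchy-transform} PDEs (\ref{eqn:Gwt}) and (\ref{eqn:Gm}) to obtain the functional equations (\ref{eqn:GD_func}) and (\ref{eqn:GBW_func}) of Proposition~\ref{thm:GD}, then converts these to $R$-transform statements purely algebraically by substituting $z=G^{\bra -1\ket}_{\mu_t}(\zeta)$ and invoking the definition (\ref{eqn:R1}). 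Crucially, for part~$(iii)$ the paper does not touch the coupled nonlinear characteristic system at all: it deduces (\ref{eqn:chiral_Burgers_R_1}) from (\ref{eqn:W_Burgers_R_1}) via Lemma~\ref{thm:mu2}$(iii)$, i.e., the relation $R_{\mu}(z)=R_{\mu^{(2)}}\bigl(z^2/(R_{\mu}(z)+1)\bigr)$ between a symmetric measure and its second push-forward, using $\rmm_{\lambda,t}=\rw_{\lambda,t}^{(2)}$. Your approach buys a self-contained dynamical derivation entirely at the level of the $R$-transforms, and it explains the otherwise opaque structure of (\ref{eqn:chiral_Burgers_R_1}) through the first integral $I$ and the linearity of $R+1$ in $t$; its cost is that it rests on the displayed $R$-transform PDEs (stated in the paper without proof, as consequences of the $G$-PDEs) and requires the nontrivial integration of a fully coupled nonlinear characteristic system, which the paper's push-forward argument sidesteps. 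You should also note explicitly that the sign ambiguity of the square root in the argument of $R_{\rw_{\lambda,0}}$ is harmless because $\rw_{\lambda,0}\in\cP^0_{\rs}(\R)$ is symmetric, so its $R$-transform is an even function.
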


\begin{Remark}\label{Remark1}
 Consider a matrix-valued Brownian motion
 $(M_t)_{t \geq 0}$ which is given by
 a time-evolution of a Hermitian
 $N \times N$ matrix starting from
 a Hermitian matrix $M_0$.
 When $M_0$ is a~null matrix,
 we write this process as $\big(M_t^0\big)_{t \geq 0}$.
 Then we have
 \begin{gather}
 M_t=M_0+M_t^0, \qquad t \geq 0. \label{eqn:RM_process}
 \end{gather}
 We assume that the empirical eigenvalue distribution
 of $M_0$ converges to $\rw_0$ as $N \to \infty$.
 As~an eigenvalue process of (\ref{eqn:RM_process}),
 we can obtain Dyson's Brownian motion model with $\beta=2$
 starting from the eigenvalues of $M_0$.
 (See Section \ref{sec:hydro} below.)
 Moreover, for any $\beta >0$, we can
 obtain the same Cauchy transform.
 The process $(\rw_t)_{t \geq 0}$ is obtained as
 the time-evolution of the
 limit empirical measure of
 Dyson's Brownian motion model.
 We can show that
 $M_0$ and $M_t^0$ are \textit{asymptotically free}
 for any $t \geq 0$.
 Thus at each time $t \geq 0$,
 the limiting eigenvalue distribution~$\rw_t$ of $M_t$
 converges to $\rw_0 \boxplus \rw_t^0$.
 That is, the assertion $(i)$ of Theorem \ref{thm:main}
 is consistent with the asymptotic freeness of
 a random matrix in the GUE and an arbitrary
 deterministic Hermitian matrix.
 Such an interpretation of the assertion $(ii)$ of
 Theorem \ref{thm:main} is not yet known
 and is left as a challenging future problem.
\end{Remark}

%%%%%%%%%%%%%%%%

\begin{Remark}
 \label{Remark2}
 With respect to the process $(\rw_t)_{t \geq 0}$,
 it is pointed out that
 the functional equations~(\ref{eqn:Burgers_R_1})
 for the $R$-transform
 and (\ref{eqn:GD_func}) for the Cauchy transform
 given below are consequences of the
 \textit{Markov property of freeness} and its relation to
 \textit{analytic subordination} \cite{BB07,Bia98,Voi00}.
 As shown in Sections \ref{sec:solution} and \ref{sec:proof_main},
 we will prove (\ref{eqn:Burgers_R_1}) from (\ref{eqn:GD_func}).
 In the context of analytic subordination,
 the functional equation (\ref{eqn:GD_func}) shall be considered
 to prove unique existence of the one-parameter
 ($t \in [0, \infty)$) family of functions
 $\omega_t$ such that
 \begin{gather*}
 \lim_{y \uparrow \infty} \frac{\omega_t\big(\sqrt{-1} y\big)}{\sqrt{-1} y}
 =1, \qquad \text{and}
 \qquad G_{\rw_t} = G_{\rw_0} \circ \omega_t,
 \quad t \in [0, \infty),
 \end{gather*}
 by explicitly showing that
 $\omega_t(z)=z-t G_{\rw_t}(z)$.
 Here the unique existence of $(G_{\rw_t}(z))_{t \geq 0}$
 with appropriate properties is guaranteed by the fact that
 $(G_{\rw_t}(z))_{t \geq 0}$ solves the complex
 Burgers equation (\ref{eqn:Gwt}).
 For the other processes $(\rmm_{\lambda, t})_{t \geq 0}$
 and $(\rw_{\lambda, t})_{t \geq 0}$,
 the connections of our results
 with analytic subordination properties should be studied
 in the future.
\end{Remark}

\begin{Remark} \label{Remark3}
 The process $\big(\rw_t^0\big)_{t \geq 0}$ given by (\ref{eqn:mu_B0_1})
 is identified with the \textit{free Brownian motion}
 studied in free probability theory \cite{Bia97,Spe90}.
 Here we would like to consider
 the process $(\rw_t)_{t \geq 0}$ determined by
 the Burgers equation~(\ref{eqn:Gwt})
 as a generalization of the free Brownian motion,
 since initial probability measure $\rw_0$ is
 now arbitrary in $\cP^0(\R)$.
 See also~\cite{Bia97b,Voi86} for the important connection
 between the complex Burgers equation~(\ref{eqn:Gwt})
 and free probability theory.
 Capitaine and Donati-Martin \cite{CDM05}
 introduce the \textit{free Wishart processes}
 based on the two-parametric Marcenko--Pastur
 distribution (\ref{eqn:mu_W0_1}).
 Our process $(\rmm_{\lambda, t})_{t \geq 0}$
 is defined as a solution of the complex Burgers-type
 equation (\ref{eqn:Gm}) specified by its initial probability measure
 $\rmm_{\lambda, 0} \in \cP^0(\R_+)$.
 In~other words, $(\rmm_{\lambda, t})_{t \geq 0}$ is a family
 of processes parameterized by an initial measure
 $\rmm_{\lambda, 0}$, and hence
 it is different from the free Wishart process.
 In the functional equation (\ref{eqn:W_Burgers_R_1})
 in Theorem \ref{thm:main}$(ii)$,
 the parameterization by an initial probability measure
 is realized by the first term in the r.h.s.,
 which is added to the fundamental solution
 for the $R$-transform, $R_{\rmm_{\lambda, t}^0}(z)$,
 of the two-parametric Marcenko--Pastur
 distribution (\ref{eqn:mu_W0_1}).
 The \textit{rectangular free convolutions} studied by
 Benaych-Georges \cite{BenGeo09,BenGeo10} are
 very interesting and important extensions of the
 square free convolutions. They are based on the
 original Marcenko--Pastur distribution
 (i.e., the special case of~(\ref{eqn:mu_W0_1}) at $t=1$).
 By this reason, it is not easy to discuss the
 present study from the viewpoint of the
 rectangular free convolutions.
 We want to leave this topic as a future problem.
 The equality~(\ref{eqn:chiral_Burgers_R_1})
 in Theorem \ref{thm:main}$(iii)$
 seems to be so complicated, but it clearly shows that
 if $\lambda=1$, this equality is reduced to
 (\ref{eqn:Burgers_R_1})
 as expected from (\ref{eqn:rw_1t}).
 To the best of our knowledge,
 the chiral GUE and its time evolution $(\rw_{\lambda, t})_{t \geq 0}$
 determined by (\ref{eqn:Gm1/2}) have not been systematically
 studied in free probability theory.
\end{Remark}

\subsection[Results for the S-transforms]{Results for the $\boldsymbol S$-transforms}%\label{sec:result_S}

For $\big(\rw^0_t\big)_{t \geq 0}$ starting from $\delta_0$,
it is easy to verify that (see, for instance, \cite{PAS12})
\begin{gather}
S_{\rw^0_t}(z)=\sqrt{\frac{1}{tz}}, \qquad t \geq 0.\label{eqn:S_wt}
\end{gather}
For the process $\big(\rmm^0_{\lambda,t}\big)_{t \geq 0}$
starting from $\delta_0$, we have~\cite{PAS12}
\begin{gather*}
S_{\rmm^0_{\lambda,t}}(z)=\frac{1}{t(z+\lambda)}, \qquad t \geq 0.
\end{gather*}

\begin{Theorem}\label{thm:main2}\quad
\begin{enumerate}\itemsep=0pt
\item[$(i)$]
Provided that
$\rw_{\lambda,0}=\rmm_{\lambda, 0}^{\bf s}$, $\lambda \in \R_+$,
the following equality holds,
\begin{gather}
S_{\rw_{\lambda, t}}(z)=S_{\rd}(z) \sqrt{S_{\rmm_{\lambda, t}}(z)},
\qquad t \geq 0.
\label{eqn:S_multiple}
\end{gather}
\item[$(ii)$]
Assume that the $S$-transform of the initial probability measure
$S_{\rw_0}(z)$ is well defined.
Then,
\begin{gather}
\frac{S_{\rw_t}(z)}{1-(S_{\rw_t}(z)/S_{\rw^0_t}(z))^2}
=S_{\rw_0} \big(z \big\{ 1-\big(S_{\rw_t}(z)/S_{\rw^0_t}(z)\big)^2 \big\} \big),
\qquad t \geq 0.\label{eqn:S_time_Dyson}
\end{gather}
\item[$(iii)$]
Assume that the $S$-transform of the initial measure
$S_{\rmm_{\lambda, 0}}(z)$, $\lambda \in \R_+$ is well defined.
Then,
\begin{gather}
\frac{S_{\rmm_{\lambda, t}}(z)}
{\big\{1-S_{\rmm_{\lambda, t}}(z)/S_{\rw^0_t}(z)^2\big\}
\big\{1-S_{\rmm_{\lambda, t}}(z)/S_{\rmm^0_{\lambda, t}}(z) \big\}}\nonumber
\\ \qquad
{}=S_{\rmm_{\lambda, 0}} \big(z \big\{ 1-S_{\rmm_{\lambda, t}}(z)/
S_{\rmm^0_{\lambda, t}}(z) \big\} \big),
\qquad t \geq 0.
\label{eqn:S_time_Wishart}
\end{gather}
\item[$(iv)$]
Assume that the $S$-transform of the initial measure
$S_{\rw_{\lambda, 0}}(z)$, $\lambda \in \R_+$ is well defined.
Then,
\begin{gather}
\sqrt{\frac{1- \frac{z}{1+z}
\big(S_{\rw_{\lambda, t}}(z)/S_{\rw_{\lambda,t}^0}(z)\big)^2}
{1- \frac{z}{1+z} \big(S_{\rw_{\lambda, t}}(z)/S_{\rw_t^0}(z)\big)^2}}
\frac{S_{\rw_{\lambda, t}}(z)}
{1-\big(S_{\rw_{\lambda, t}}(z)/S_{\rw_{\lambda, t}^0}(z)\big)^2}\nonumber
\\ \qquad
{}=S_{\rw_{\lambda, 0}} \big(z \big\{1- (S_{\rw_{\lambda, t}}(z)/S_{\rw_{\lambda, t}^0}(z))^2\big\} \big),
\qquad t \geq 0.
\label{eqn:S_time_sqrWishart}
\end{gather}
Since $\rw_{1,t}=\rw_t$, if $\lambda=1$
\eqref{eqn:S_time_sqrWishart} is reduced to~\eqref{eqn:S_time_Dyson}.
\end{enumerate}
\end{Theorem}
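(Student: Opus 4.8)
The engine of the whole proof is an elementary bridge between the $R$- and $S$-transforms that follows directly from the definitions (\ref{eqn:Psi1}), (\ref{eqn:R1}), (\ref{eqn:chi_Psi1}), (\ref{eqn:S_def1}). Setting $y=\Psi_{\mu}(z)$, so that $z=\chi_{\mu}(y)$, in the identity $z(\Psi_{\mu}(z)+1)=G_{\mu}(1/z)$ and then applying $G_{\mu}^{\bra -1 \ket}$ yields $R_{\mu}(\chi_{\mu}(y)(1+y))=y$; since $\chi_{\mu}(y)(1+y)=y S_{\mu}(y)$ by (\ref{eqn:S_def1}), this is
\[
R_{\mu}(zS_{\mu}(z))=z,
\qquad\text{equivalently}\qquad
R_{\mu}^{\bra -1 \ket}(w)=w S_{\mu}(w).
\]
In particular, whenever $R_{\mu}(A)=B$ we may read off $A=B\,S_{\mu}(B)$. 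This is exactly the device that converts the $R$-transform functional equations of Theorem \ref{thm:main} into the $S$-transform equations asserted here.

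For part $(i)$ I would not invoke Theorem \ref{thm:main} at all. Under the hypothesis $\rw_{\lambda,0}=\rmm_{\lambda,0}^{\bf s}$ the initial measure is symmetric, this symmetry is propagated by the PDE (\ref{eqn:Gm1/2}), so $\rw_{\lambda,t}\in\cP^0_{\rs}(\R)$ for all $t\ge 0$, and $\rmm_{\lambda,t}=\rw_{\lambda,t}^{(2)}$ by the matching of initial data and (\ref{eqn:def_rmm}). The symmetrization identity $S_{\mu}(z)^2=\frac{1+z}{z}S_{\mu^{(2)}}(z)$ for symmetric $\mu$ (stated just before (\ref{eqn:Sda})) then gives $S_{\rw_{\lambda,t}}(z)^2=\frac{1+z}{z}S_{\rmm_{\lambda,t}}(z)=S_{\rd}(z)^2 S_{\rmm_{\lambda,t}}(z)$, where $S_{\rd}(z)^2=(1+z)/z$ from (\ref{eqn:Sda}); choosing the branch compatible with the common domain produces (\ref{eqn:S_multiple}).

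For parts $(ii)$ and $(iii)$ I would substitute $u=z S_{\mu_t}(z)$ into the $R$-transform equations of Theorem \ref{thm:main}. On the left the bridge gives $R_{\mu_t}(u)=z$, and on the right it converts $R_{\mu_0}(A)=B$ into $A=B\,S_{\mu_0}(B)$. For the Wishart case $(iii)$, substituting into (\ref{eqn:W_Burgers_R_1}) (Theorem \ref{thm:main}$(ii)$) gives $R_{\rmm_{\lambda,0}}\!\big(\tfrac{u}{1-tu}\big)=z\{1-t(z+\lambda)S\}$ with $S=S_{\rmm_{\lambda,t}}(z)$; applying $A=B\,S_{\rmm_{\lambda,0}}(B)$ and dividing by $z$ yields (\ref{eqn:S_time_Wishart}) after inserting $S_{\rw_t^0}(z)^2=1/(tz)$ and $S_{\rmm^0_{\lambda,t}}(z)=1/(t(z+\lambda))$, so that $S/S_{\rw_t^0}^2=tzS$ and $S/S_{\rmm^0_{\lambda,t}}=t(z+\lambda)S$. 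The Dyson case $(ii)$ is the same computation applied to (\ref{eqn:Burgers_R_1}) (Theorem \ref{thm:main}$(i)$) with the single force term $R_{\rw_t^0}(z)=tz^2$, giving (\ref{eqn:S_time_Dyson}).

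Part $(iv)$ is the most involved, and rather than translate the complicated equation (\ref{eqn:chiral_Burgers_R_1}) directly I would derive it by feeding the already-proved parts $(i)$ and $(iii)$ into one another. Write $S=S_{\rw_{\lambda,t}}(z)$ and $P=S_{\rmm_{\lambda,t}}(z)$; part $(i)$ at time $t$ gives $P=\frac{z}{1+z}S^2$, and part $(i)$ applied to the fundamental solution gives $S_{\rw_{\lambda,t}^0}(z)^2=\frac{1+z}{tz(z+\lambda)}$. A short computation then yields the two identifications $P/S_{\rw_t^0}(z)^2=\frac{z}{1+z}\big(S/S_{\rw_t^0}(z)\big)^2$ and $P/S_{\rmm^0_{\lambda,t}}(z)=\big(S/S_{\rw_{\lambda,t}^0}(z)\big)^2$, whence the argument of $S_{\rmm_{\lambda,0}}$ in (\ref{eqn:S_time_Wishart}) equals $w:=z\{1-(S/S_{\rw_{\lambda,t}^0})^2\}$. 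Replacing $S_{\rmm_{\lambda,0}}(w)$ by $\frac{w}{1+w}S_{\rw_{\lambda,0}}(w)^2$ (part $(i)$ at $t=0$), squaring the target (\ref{eqn:S_time_sqrWishart}), and cancelling the common factor reduces everything to the elementary identity
\[
\frac{1-\frac{z}{1+z}Q}{1-Q}=\frac{1+w}{w}\cdot\frac{z}{1+z},
\qquad Q=\big(S/S_{\rw_{\lambda,t}^0}\big)^2,\quad w=z(1-Q),
\]
which holds for all $z$; the $\lambda=1$ specialization, where the force terms collapse and $\rw_{1,t}=\rw_t$, recovers $(ii)$ and serves as a check. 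The genuine obstacle is not any single computation but the bookkeeping of branches and domains: the $S$-transform of a symmetric measure is two-valued (the $S_{\mu}$/$\widetilde S_{\mu}$ ambiguity noted before (\ref{eqn:Sda})), so the square roots in $(i)$ and $(iv)$ must be assigned consistent branches on the relevant region $\Psi_{\mu}\big(\sqrt{-1}\,\C^+\big)$ and its symmetric analogue, and one must verify that $u=z S_{\mu_t}(z)$ remains inside the domain where the inversion $R_{\mu}^{\bra -1 \ket}(w)=w S_{\mu}(w)$ is valid; checking that the correct branch is selected throughout, and that the resulting identities continue analytically to the stated domains, is where the real care lies.
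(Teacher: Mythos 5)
Your proposal is correct. For parts $(i)$ and $(iv)$ it is essentially the paper's own argument: the paper simply invokes Lemma~\ref{thm:mu2}$(iv)$ (equivalently, the symmetrization identity $S_{\mu}(z)^2=\frac{1+z}{z}S_{\mu^{(2)}}(z)$ that you use) applied to the definition \eqref{eqn:def_rmm}, and your explicit algebraic verification for $(iv)$ --- reducing to $\frac{1-\frac{z}{1+z}Q}{1-Q}=\frac{1+w}{w}\cdot\frac{z}{1+z}$ with $w=z(1-Q)$ --- is exactly the content of that one-line appeal. For parts $(ii)$ and $(iii)$, however, you take a genuinely different route. You pass the already-established $R$-transform equations of Theorem~\ref{thm:main} through the bridge $R_{\mu}(zS_{\mu}(z))=z$, i.e., $R_{\mu}^{\bra -1 \ket}(w)=wS_{\mu}(w)$, which you correctly derive from \eqref{eqn:Psi1}, \eqref{eqn:R1} and \eqref{eqn:S_def1}; I checked the resulting computations for \eqref{eqn:S_time_Dyson} and \eqref{eqn:S_time_Wishart} and they are right. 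The paper instead goes back to the Cauchy-transform functional equations of Proposition~\ref{thm:GD}: it substitutes $z\to 1/z$, rewrites via the moment generating function $\Psi_{\mu_t}$ using \eqref{eqn:Psi1}, evaluates at $z=\chi_{\mu_t}(\zeta)$, and applies $\chi_{\mu_0}$ to both sides. The two computations are isomorphic changes of variables, but the logical architecture differs: in the paper, Theorems~\ref{thm:main} and~\ref{thm:main2} are parallel, independent consequences of Proposition~\ref{thm:GD}, whereas in your version Theorem~\ref{thm:main2}$(ii)$,$(iii)$ become corollaries of Theorem~\ref{thm:main}$(i)$,$(ii)$. Your route is shorter and makes the $R$--$S$ duality explicit; the paper's route stays entirely within the $\Psi/\chi$ calculus in which the $S$-transform is natively defined, which keeps the domain and branch questions you rightly flag at the end a little closer to the surface. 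Neither the paper nor your proposal fully resolves those branch/domain issues, so flagging them as the locus of ``real care'' is appropriate rather than a gap.
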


\begin{Remark} \label{Remark4}
 The equality (\ref{eqn:S_multiple}) in
 the assertion $(i)$ of Theorem \ref{thm:main2}
 can be regarded as a ``push-back'' representation
 of the equality (\ref{eqn:def_rmm}) expressed
 using the second push-forward measure.
 This is very simple, but reveals an important role
 of the symmetric Bernoulli delta measure~$\rd_a$
 with $a=1$ defined by (\ref{eqn:da}),
 whose $S$-transform is given by (\ref{eqn:Sda}).
 If $\sqrt{S_{\rmm_{\lambda, t}}(z)}$, $t \geq 0$ is realized as
 an $S$-transform of a probability measure,
 say $(\nu_{\lambda, t})_{t \geq 0}$,
 then (\ref{eqn:S_multiple}) implies
 the equality
 $\rw_{\lambda, t}= \rd \boxtimes \nu_{\lambda, t}$, $t \geq 0$.
\end{Remark}

%%%%%%%%%%%%%%%%%%%%%%%%%%%
\subsection{Applications}
%\label{sec:applications}
%%%%%%%%%%%%%%%%%%%%%%%%%%

First we apply the above theorems to
the process $(\rw_t)_{t \geq 0}$
starting from the symmetric Bernoulli delta measure $\rd_a$
with displacement $2a >0$ given by (\ref{eqn:da}).
Here we write this process as~$(\rw_t^a)_{t \geq 0}$.
%%%%%%%%%%%%%%%%%%%%%%%%%%%%
\begin{Proposition}
\label{thm:wa}
The $R$-transform and the $S$-transform of
$(\rw_t^a)_{t \geq 0}$ are given by
\begin{gather}
R_{\rw_t^a}(z) =\frac{1}{2} \Big[ 2 t z^2
-1 + \sqrt{1+ 4 a^2 z^2} \Big], \qquad t \geq 0,
\label{eqn:R_wa}
\\
S_{\rw_t^a}(z)= \frac{1}{(t z)^{1/2}}
\bigg[ 1 + \frac{1}{2z} + \frac{a^2}{2 t z}
- \frac{1}{2z} \bigg( 1+\frac{a^2}{t} \bigg)
\sqrt{1+ \frac{4a^2/t}{(1+a^2/t)^2} z } \, \bigg]^{1/2},
\qquad t \geq 0.
\label{eqn:S_wa}
\end{gather}
\end{Proposition}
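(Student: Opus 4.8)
The plan is to compute the two transforms separately, feeding the explicit initial and fundamental data into the functional equations of Theorems~\ref{thm:main} and~\ref{thm:main2}.

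For the $R$-transform I would apply Theorem~\ref{thm:main}$(i)$ with $\rw_0=\rd_a$, which gives $R_{\rw_t^a}(z)=R_{\rd_a}(z)+R_{\rw_t^0}(z)=R_{\rd_a}(z)+tz^2$. The only input still needed is $R_{\rd_a}$, which I would extract from the initial Cauchy transform $G_{\rd_a}(z)=z/(z^2-a^2)$: setting $w=G_{\rd_a}(z)$ and solving $wz^2-z-wa^2=0$ for $z$ produces the inverse function, and the branch with $z\sim 1/w$ as $w\to0$ (forced by $G_{\rd_a}(z)\sim 1/z$ as $z\to\infty$) is $G_{\rd_a}^{\bra -1 \ket}(w)=\big(1+\sqrt{1+4a^2w^2}\big)/(2w)$. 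Then (\ref{eqn:R1}) yields $R_{\rd_a}(z)=zG_{\rd_a}^{\bra -1 \ket}(z)-1=\big({-}1+\sqrt{1+4a^2z^2}\big)/2$, and adding $tz^2$ produces (\ref{eqn:R_wa}).

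For the $S$-transform I would use the functional equation (\ref{eqn:S_time_Dyson}) of Theorem~\ref{thm:main2}$(ii)$ with $S_{\rw_0}=S_{\rd_a}$ from (\ref{eqn:Sda}) and $S_{\rw_t^0}(z)^2=1/(tz)$ from (\ref{eqn:S_wt}). Writing $\sigma=S_{\rw_t^a}(z)$ and $K=1-\big(\sigma/S_{\rw_t^0}(z)\big)^2=1-tz\sigma^2$, equation (\ref{eqn:S_time_Dyson}) becomes $\sigma/K=S_{\rd_a}(zK)=\frac{1}{a}\sqrt{(1+zK)/(zK)}$. Squaring and clearing denominators turns this into $a^2z\sigma^2=K+zK^2$, and substituting $K=1-tz\sigma^2$ gives a quadratic in $p:=\sigma^2$,
\begin{gather*}
t^2z^3 p^2-z\big(a^2+t+2tz\big)p+(1+z)=0,
\end{gather*}
whose discriminant simplifies to $z^2\big[(t+a^2)^2+4a^2tz\big]$. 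Solving for $p$ and then rewriting $(t+a^2)^2+4a^2tz=(t+a^2)^2\big(1+\frac{4a^2t}{(t+a^2)^2}z\big)$ together with $\frac{4a^2t}{(t+a^2)^2}=\frac{4a^2/t}{(1+a^2/t)^2}$ recasts $\sigma^2$ into the squared form of (\ref{eqn:S_wa}).

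The main obstacle is the branch selection, sharpened by the fact that $\rw_t^a$ is a symmetric measure on $\R$, for which, as recalled in the text, two $S$-transforms coexist. I would fix the correct root of the quadratic by matching the $z\to0$ asymptotics: the free additive convolution in $(i)$ forces variance $t+a^2$, hence $S_{\rw_t^a}(z)\sim 1/\sqrt{(t+a^2)z}$, which selects the minus sign in the quadratic formula (the plus sign violates this normalization). The remaining work---reconciling the coincident signs and performing the algebraic simplification into the displayed nested-radical form---is routine. As an independent check, one may bypass Theorem~\ref{thm:main2}$(ii)$ altogether and invert the $R$-transform (\ref{eqn:R_wa}) through the identity $S_\mu(z)=R_\mu^{\bra -1 \ket}(z)/z$, which follows directly from (\ref{eqn:Psi1}), (\ref{eqn:R1}), (\ref{eqn:chi_Psi1}) and (\ref{eqn:S_def1}); this route leads to the same quadratic and the same branch.
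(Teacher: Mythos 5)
Your proposal is correct and follows essentially the same route as the paper: (\ref{eqn:R_wa}) from Theorem~\ref{thm:main}$(i)$ with $R_{\rd_a}(z)=\frac{1}{2}\big[\sqrt{1+4a^2z^2}-1\big]$ and $R_{\rw_t^0}(z)=tz^2$, and (\ref{eqn:S_wa}) by inserting (\ref{eqn:Sda}) and (\ref{eqn:S_wt}) into (\ref{eqn:S_time_Dyson}) and reducing to the identical quadratic $t^2z^3S^4-z\big(2tz+t+a^2\big)S^2+(z+1)=0$. The only differences are that you spell out the inversion of $G_{\rd_a}$ and the branch selection via the small-$z$ asymptotics $S_{\rw_t^a}(z)\sim 1/\sqrt{(t+a^2)z}$, which the paper leaves implicit; both additions are sound.
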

%%%%%%%%%%%%%%%%%%%%%%%%%%%%%%%%%

Note that (\ref{eqn:R_wa}) determines the
free cumulants of $(\rw_{t}^a)_{t \geq 0}$ as
\begin{gather*}
\kappa_n(\rw_t^a)=
\begin{cases}
t+a^2, & n=2,
\cr
\displaystyle{
-(-1)^{n/2} \frac{(n-3)!!}{(n/2)!} 2^{n/2-1} a^n
},
& n \in \{4, 6, 8, \dots\},
\cr
0, & \mbox{otherwise},
\end{cases}
\qquad t \geq 0.
\end{gather*}
As well known, for the fundamental solution
$\big(\rw_t^0\big)_{t \geq 0}$,
$R_{\rw_t^0}(z)=t z^2$, and hence
$\kappa_n\big(\rw_t^0\big)=t \delta_{n 2}$,
$n \in \N$, $t \geq 0$.
The complexity of the solution
$(\rw_t^a)_{t \geq 0}$ with $a >0$
reported in \cite{AK16,Nad11,War14} is simply expressed here
by the emergence of free cumulants $\kappa_n(\rw_t^a)$
for all $n \in \N$, $t \geq 0$.

Next we apply the theorems to
the process $(\rmm_{\lambda, t})_{t \geq 0}$
starting from $\delta_b$ with $b > 0$.
Here we write this process as
$\big(\rmm_{\lambda, t}^b\big)_{t \geq 0}$.
If the variable $x$ is replaced by $x/\lambda$,
the parameters $r$ by $1/\lambda$, and
$a$ by $b/\lambda$
in the \textit{three parametric Marcenko--Pastur measure}
studied in~\cite{EK20},
we obtain the present probability measure
$\rmm_{\lambda, t}^b$, $t \geq 0$.

\begin{Proposition}\label{thm:ma}
The $R$-transform and the $S$-transform of
$\big(\rmm_{\lambda, t}^b\big)_{t \geq 0}$ are given by
\begin{gather}
R_{\rmm_{\lambda, t}^b}(z) =
\frac{z \big\{(\lambda t+b)-\lambda t^2 z \big\}}{(1-tz)^2}, \qquad t \geq 0,
\label{eqn:R_ma}
\\
S_{\rmm_{\lambda, t}^b}(z) \frac{1}{t(\lambda+z)}
\bigg[1+\frac{\lambda}{2z} + \frac{b}{2tz}
- \frac{1}{2z}\bigg( \lambda + \frac{b}{t} \bigg)
\sqrt{ 1 + \frac{4 b/t}{(\lambda+b/t)^2}z } \bigg],
\qquad t \geq 0.
\label{eqn:S_ma}
\end{gather}
\end{Proposition}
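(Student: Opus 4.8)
The plan is to obtain both transforms by specializing the functional equations of Theorems~\ref{thm:main}$(ii)$ and~\ref{thm:main2}$(iii)$ to the initial probability measure $\rmm_{\lambda,0}=\delta_b\in\cP^0(\R_+)$, which satisfies $\delta_b(\{0\})=0<1$ so that its $S$-transform is well defined. The only auxiliary inputs needed are the $R$- and $S$-transforms of a single delta measure. Inverting the Cauchy transform $G_{\delta_b}(z)=1/(z-b)$ gives $G_{\delta_b}^{\bra -1 \ket}(z)=b+1/z$, so by~(\ref{eqn:R1}) one finds $R_{\delta_b}(z)=bz$; inverting the moment generating function $\Psi_{\delta_b}(z)=bz/(1-bz)$ gives $\chi_{\delta_b}(z)=z/\{b(1+z)\}$, so by~(\ref{eqn:S_def1}) the $S$-transform is the constant $S_{\delta_b}(z)=1/b$.

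For~(\ref{eqn:R_ma}) I would substitute $R_{\rmm_{\lambda,0}}=R_{\delta_b}$ directly into the functional equation~(\ref{eqn:W_Burgers_R_1}), together with the fundamental term $R_{\rmm^0_{\lambda,t}}(z)=\lambda tz/(1-tz)$. Since $R_{\delta_b}(z)=bz$ is linear, this is a purely rational manipulation free of any branch ambiguity,
\begin{gather*}
R_{\rmm_{\lambda,t}^b}(z)=\frac{bz}{(1-tz)^2}+\frac{\lambda tz}{1-tz}=\frac{bz+\lambda tz(1-tz)}{(1-tz)^2},
\end{gather*}
and collecting the numerator gives~(\ref{eqn:R_ma}) at once.

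The $S$-transform is the more delicate case, and the key structural observation is that $S_{\delta_b}$ is constant. As a result the right-hand side of~(\ref{eqn:S_time_Wishart}), which in general depends on the unknown $S_{\rmm_{\lambda,t}}(z)$ through its argument, collapses to the constant $1/b$ independently of that argument, so the functional equation degenerates into a genuine algebraic equation. Writing $S:=S_{\rmm_{\lambda,t}^b}(z)$ and inserting $S_{\rw^0_t}(z)^2=1/(tz)$ from~(\ref{eqn:S_wt}) and $S_{\rmm^0_{\lambda,t}}(z)=1/\{t(z+\lambda)\}$, the relation becomes $bS=(1-Stz)\{1-St(z+\lambda)\}$, that is,
\begin{gather*}
t^2z(z+\lambda)\,S^2-\{t(2z+\lambda)+b\}\,S+1=0.
\end{gather*}

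Solving this quadratic is the final step, and selecting the branch is where I expect the only real subtlety. The discriminant simplifies cleanly to $(t\lambda+b)^2+4tbz=(t\lambda+b)^2\{1+(4b/t)z/(\lambda+b/t)^2\}$, matching the square root in~(\ref{eqn:S_ma}). Of the two roots I would take the one with the minus sign, since it is the branch for which the numerator vanishes at $z=0$, cancelling the factor $z$ in the denominator $2t^2z(z+\lambda)$ and keeping $S$ finite there; the plus sign would produce a spurious pole at the origin. A short consistency check confirms the choice: the minus branch yields $S_{\rmm_{\lambda,t}^b}(0)=1/(t\lambda+b)$, in agreement with the first moment $\tau_1=t\lambda+b$ read off as the coefficient of $z$ in~(\ref{eqn:R_ma}). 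Dividing through by $2t^2z(z+\lambda)$ and regrouping the constant, the $\lambda/(2z)$, the $b/(2tz)$, and the square-root contributions then reproduces~(\ref{eqn:S_ma}).
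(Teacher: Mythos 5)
Your proposal is correct and follows essentially the same route as the paper: substitute $R_{\delta_b}(z)=bz$ into the functional equation of Theorem~\ref{thm:main}$(ii)$ for the $R$-transform, and use $S_{\delta_b}(z)=1/b$ in Theorem~\ref{thm:main2}$(iii)$ to reduce the $S$-transform relation to the quadratic $t^2z(z+\lambda)S^2-(2tz+t\lambda+b)S+1=0$. Your explicit justification of the minus branch via finiteness at $z=0$ and the check $S(0)=1/(t\lambda+b)=1/\tau_1$ is a welcome detail the paper leaves implicit.
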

%%%%%%%%%%%%%%%%%%%%%%%%%%%%%%%%%

Comparing (\ref{eqn:S_wa}) and (\ref{eqn:S_ma}),
we obtain the equality,
\begin{gather}
S_{\rw_t^a}(z)
= \sqrt{\frac{1+z}{z}} \sqrt{S_{\rmm_{1, t}^{a^2}}(z)}.
\label{eqn:Swa}
\end{gather}
It is readily confirmed by the definition
of the second push-forward measure
(\ref{eqn:mu_p}) with $p=2$ that
$\rd_a^{(2)}=\delta_{a^2}$, $a>0$.
Hence, the matching of initial measures is
established, and as a special case of (\ref{eqn:rw_t2_rm1t}),
$(\rw_t^a)^{(2)}=m_{1, t}^{a^2}$, $t \geq 0$, $a>0$.
Therefore,
(\ref{eqn:Swa}) can be regarded as a special case of
the assertion $(i)$ of Theorem \ref{thm:main2}.

Note that (\ref{eqn:R_ma}) determines the
free cumulants of $\rmm_{\lambda, t}^b$ as
\begin{gather*}
\kappa_n\big(\rmm_{\lambda, t}^b\big)
=(\lambda t + b n) t^{n-1},
\qquad n \in \N, \quad t \geq 0.
\end{gather*}
The complexity of $\big(\rmm_{\lambda, t}^b\big)_{t \geq 0}$
with $b>0$ reported in \cite{EK20} is simply expressed
by a shift $\lambda t \to \lambda t + bn$
in the above formulas for $\kappa_n\big(\rmm_{\lambda, t}^b\big)$,
$n \in \N$, $t \geq 0$.
The dynamical phase transitions
studied by \cite{AK16,EK20,Nad11,War14}
seem to be hidden in the above solutions
for the $R$-transforms and the $S$-transforms.
Extracting the singularity at the transition point
from the above results will be a future problem.

The present paper is organized as follows.
In Section~\ref{sec:preliminaries} we explain how the complex
Burgers-type equations are derived in the hydrodynamic limits of
stochastic log-gases.
Then we prove fundamental relations
between a measure $\mu \in \cP^0_{\rs}(\R)$
and its second push-forward measure
$\mu^{(2)} \in \cP^0(\R_+)$.
Section \ref{sec:solution} is devoted to solving
the present three kinds of complex Burgers equations
(\ref{eqn:Gwt}), (\ref{eqn:Gm1/2}), and (\ref{eqn:Gm})
by the method of characteristic curves \cite{CH62,Del97}.
Proofs of Theorems \ref{thm:main}, \ref{thm:main2}
and Propositions \ref{thm:wa}, \ref{thm:ma} are given in
Section \ref{sec:proof}.
Concluding remarks are given in Section \ref{sec:remarks}.

\section{Preliminaries}
\label{sec:preliminaries}

\subsection[From matrix-valued processes to
complex Burgers-type equations through hydrodynamic limit]
{From matrix-valued processes to
complex Burgers-type equations \\through hydrodynamic limit}
\label{sec:hydro}

For $N \in \N :=\{1,2, \dots\}$,
let $\sH_N$ and $\sU_N$ be the space of
$N \times N$ Hermitian matrices and
the group of $N \times N$ unitary matrices, respectively.
We consider complex-valued
continuous semi-martingale processes
$\big(M^{ij}_t\big)_{t \geq 0}, 1\leq i, j \leq N$
with the condition $\overline{M^{ji}_t}=M^{ij}_t$,
where $\overline{z}$ denotes the
complex conjugate of $z \in \C$,
and define an $\sH_N$-valued process by
$M_t=\big(M^{ij}_t\big)_{1 \leq i, j \leq N}$.
For $S=\R$ or $\R_+$, define
the Weyl chambers as
$\W_N(S) := \big\{ \x=\big(x^1, \dots, x^N\big) \in S^N\colon x^1 < \cdots < x^N\big\}$,
and write their closures as
$\overline{\W_N(S)} = \big\{ \x \in \overline{S}^N \colon
x^1 \leq \cdots \leq x^N\big\}$.
For each $t \geq 0$, there exists
$U_t=\big(U^{ij}_t\big)_{1 \leq i, j \leq N} \in \sU_N$ such that
it diagonalizes $M_t$ as
$U^{\dagger}_t M_t U_t
= {\rm diag}\big(\Lambda^1_t, \dots, \Lambda^N_t\big)$
with the eigenvalues $\{\Lambda^i _t\}_{i=1}^N$ of $M_t$,
where $U^{\dagger}_t$ is the Hermitian conjugate of
$U_t$; $\big(U^{ij}_t\big)^{\dagger}=\overline{U^{ji}_t}$, $1 \leq i, j \leq N$,
and we assume
$\Lambda_t :=\big(\Lambda^1_t, \dots, \Lambda^N_t\big)
\in \overline{\W_N(\R)}$, $t\geq 0$.
For ${\rm d} M_t:=\big({\rm d} M^{ij}_t\big)_{1 \leq i, j \leq N}$, define
a set of quadratic variations,
\begin{gather*}
\Gamma^{ij, k \ell}_t:= \big\langle
\big(U^{\dagger} {\rm d} M U\big)^{ij},
\big(U^{\dagger} {\rm d} M U\big)^{k \ell} \big\rangle_t,
\qquad 1 \leq i, j, k, \ell \leq N, \quad t \geq 0.
\end{gather*}
The following is proved \cite{Bru89,Kat16,KT04}.
See~\cite[Section~4.3]{AGZ10} for details of proof.

\begin{Proposition}\label{thm:Bru}
The eigenvalue process $(\Lambda_t)_{t \geq 0}$ satisfies
the following system of SDEs,
\begin{gather*}
{\rm d} \Lambda^i_t={\rm d} \cM^i_t+ {\rm d}J^i_t, \qquad t \geq 0,
\quad 1 \leq i \leq N,
\end{gather*}
where $(\cM^i_t)_{t \geq 0}, 1 \leq i \leq N$
are martingales with quadratic variations
$\langle {\rm d} \cM^i, {\rm d} \cM^j \rangle_t
= \Gamma^{ii, jj}_t {\rm d}t$, $t \geq 0$,
and $(J^i_t)_{t \geq 0}, 1 \leq i \leq N$ are the
processes with finite variations given by
\begin{gather*}
{\rm d}J^i_t= \sum_{j=1}^N
\frac{\1_{(\Lambda^i_t \not= \Lambda^j_t)}}
{\Lambda^i_t-\Lambda^j_t}
\Gamma^{ij, ji}_t {\rm d}t
+ {\rm d} \Upsilon^i_t.
\end{gather*}
Here ${\rm d} \Upsilon^i_t$ denotes
the finite-variation part of
$\big(U^{\dagger}_t {\rm d} M_t U_t\big)^{ii}$, $t \geq 0$,
$1 \leq i \leq N$.
\end{Proposition}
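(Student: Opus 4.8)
The plan is to treat each ordered eigenvalue $\Lambda^i_t$ as a function of the entries of the Hermitian semimartingale $M_t$ and to apply It\^o's formula, reading off the first- and second-order coefficients from Rayleigh--Schr\"odinger perturbation theory. The natural auxiliary object is the matrix differential rotated into the instantaneous eigenbasis, ${\rm d}N_t := U^\dagger_t\,{\rm d}M_t\, U_t$, which is again Hermitian and whose entries have quadratic covariations $\langle ({\rm d}N)^{ij}, ({\rm d}N)^{k\ell}\rangle_t = \Gamma^{ij, k\ell}_t\,{\rm d}t$ by the very definition of $\Gamma$. The whole computation is carried out on the relatively open random time set $\mathcal T$ on which the eigenvalues are mutually distinct: there the map $M_t \mapsto \Lambda^i_t$ is real-analytic, the eigenprojection onto the $i$-th eigenvector $v^i_t$ (the $i$-th column of $U_t$) is smooth, and the quantities entering the perturbation formulas, namely $(v^i_t)^\dagger\,{\rm d}M_t\, v^i_t$ and $\big|(v^j_t)^\dagger\,{\rm d}M_t\, v^i_t\big|^2$, are invariant under the residual phase ambiguity of the eigenvectors, so It\^o's formula applies after localisation by stopping times.

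First I would extract the martingale part. At first order in the perturbation a simple eigenvalue contributes $(v^i_t)^\dagger\,{\rm d}M_t\, v^i_t = ({\rm d}N_t)^{ii}$ to ${\rm d}\Lambda^i_t$. Splitting this diagonal entry into its martingale and finite-variation parts defines ${\rm d}\cM^i_t$ and ${\rm d}\Upsilon^i_t$, respectively. Since finite-variation terms do not contribute to quadratic covariation, $\langle {\rm d}\cM^i, {\rm d}\cM^j\rangle_t = \langle ({\rm d}N)^{ii}, ({\rm d}N)^{jj}\rangle_t = \Gamma^{ii, jj}_t\,{\rm d}t$, which is the asserted quadratic variation of the martingale family.

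Next I would compute the second-order It\^o correction. The standard second-order perturbation formula contributes $\sum_{j \neq i}\big|(v^j_t)^\dagger\,{\rm d}M_t\, v^i_t\big|^2/(\Lambda^i_t - \Lambda^j_t)$ to ${\rm d}\Lambda^i_t$, and because $({\rm d}N)^{ij} = \overline{({\rm d}N)^{ji}}$ by Hermiticity of ${\rm d}N$, replacing the squared increment by its quadratic covariation turns this into the drift $\sum_{j \neq i}\Gamma^{ij, ji}_t/(\Lambda^i_t - \Lambda^j_t)\,{\rm d}t$; here the It\^o factor $1/2$ is exactly absorbed when one passes from the Taylor coefficient $\tfrac12\partial^2\Lambda^i$ to the perturbative second-order term. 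Adding this drift to ${\rm d}\Upsilon^i_t$ produces ${\rm d}J^i_t$ in the stated form, where extending the sum to all $j$ with the factor $\1_{(\Lambda^i_t \neq \Lambda^j_t)}$ merely records that the $j = i$ term and any coincident eigenvalues are excluded. Together with the first-order part this gives ${\rm d}\Lambda^i_t = {\rm d}\cM^i_t + {\rm d}J^i_t$ on $\mathcal T$.

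The main obstacle is the behaviour at eigenvalue collisions. On the boundary of $\mathcal T$ the eigenprojections are no longer smooth, there is no canonical semimartingale choice of the diagonalising unitary $U_t$, and the second-order coefficients carry the singular factor $1/(\Lambda^i_t - \Lambda^j_t)$, so It\^o's formula cannot be invoked directly there. I would handle this by a localisation argument, deriving the identity on each excursion of $t$ into $\mathcal T$ by means of stopping times, and then arguing that the representation extends up to and across collision times under the indicator convention, as carried out in detail in \cite[Section~4.3]{AGZ10} (see also \cite{Bru89, KT04}). Making this gluing fully rigorous for an arbitrary Hermitian semimartingale $M_t$, rather than for a specific driving noise, is the delicate technical point on which the proof rests.
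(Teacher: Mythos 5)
Your sketch is essentially the proof that the paper itself points to: the paper offers no proof of Proposition~\ref{thm:Bru}, stating only ``The following is proved \cite{Bru89,Kat16,KT04}. See \cite[Section~4.3]{AGZ10} for details of proof'', and the argument in those references is exactly your combination of It\^o's formula with the first- and second-order Rayleigh--Schr\"odinger formulas for a simple eigenvalue, applied to ${\rm d}N_t = U_t^\dagger\,{\rm d}M_t\,U_t$ on the set where the spectrum is simple. Your identifications are all correct --- the first-order term $({\rm d}N)^{ii}$ splits into ${\rm d}\cM^i_t + {\rm d}\Upsilon^i_t$ with $\langle {\rm d}\cM^i,{\rm d}\cM^j\rangle_t = \Gamma^{ii,jj}_t\,{\rm d}t$, and the second-order correction $\sum_{j\neq i}\Gamma^{ij,ji}_t/(\Lambda^i_t-\Lambda^j_t)\,{\rm d}t$ absorbs the It\^o factor $1/2$ as you say --- and you correctly isolate the one genuinely delicate point (an adapted semimartingale choice of $U_t$ and the behaviour at eigenvalue collisions), which you defer to the same reference the paper does.
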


We will show two basic examples of $M_t \in \sH_N, t \geq 0$
and applications of Proposition~\ref{thm:Bru}, see~\cite{KT04}.
Let $\nu \in \N_0 := \N \cup \{0\}$ and
$\big(B^{ij}_t\big)_{t \geq 0}$,
$\big(\widetilde{B}^{ij}_t\big)_{t \geq 0}$,
$1 \leq i \leq N+\nu$, $1 \leq j \leq N$
be independent one-dimensional standard Brownian motions.
For $1 \leq i \leq j \leq N$, put
\begin{gather*}
S^{ij}_t= \begin{cases}
B^{ij}_t/\sqrt{2}, & i<j,
\cr
B^{ii}_t, & i=j,
\end{cases}
\qquad
A^{ij}_t= \begin{cases}
\widetilde{B}^{ij}_t/\sqrt{2}, & i<j,
\cr
0, & i=j,
\end{cases}
\end{gather*}
and let
$S^{ij}_t=S^{ji}_t$ (symmetric)
and $A^{ij}_t=-A^{ji}_t$ (anti-symmetric), $t \geq 0$
for $1 \leq j < i \leq N$.

\begin{Example}% \label{Example1}
 Put
 \begin{gather*}
 M_t= \big(M^{ij}_t\big) :=
 \big(S^{ij}_t+\sqrt{-1} A^{ij}_t\big)_{1 \leq i, j \leq N}, \qquad t \geq 0.
 \end{gather*}
 By definition
 $\langle {\rm d} M^{ij}, {\rm d} M^{k \ell} \rangle_t
 =\delta^{i \ell} \delta^{j k} {\rm d}t$, $t \geq 0$,
 $1 \leq i, j, k, \ell \leq N$.
 Hence, by unitarity of $U_t \in \sU_N$, $t \geq 0$,
 we see that $\Gamma^{ij, k \ell}_t = \delta^{i \ell} \delta^{j k}$,
 which gives
 $\langle {\rm d} \cM^i, {\rm d} \cM^j \rangle_{t}=\Gamma^{ii, jj}_t {\rm d}t =\delta^{ij} {\rm d}t$
 and $\Gamma^{ij, ji}_t \equiv 1$, $t \geq 0$, $1 \leq i, j \leq N$.
 Then Proposition \ref{thm:Bru} proves that
 the eigenvalue process $(\Lambda_t)_{t \geq 0}$,
 satisfies the following system of SDEs with $\beta=2$,
 \begin{gather}
 {\rm d} \Lambda^i_t
 = {\rm d} B^i_t + \frac{\beta}{2} \sum_{1 \leq j \leq N, j \not=i}
 \frac{{\rm d}t}{\Lambda^i_t-\Lambda^j_t},
 \qquad t \geq 0, \quad 1 \leq i \leq N.
 \label{eqn:Dyson1}
 \end{gather}
 Here $\big(B^i_t\big)_{t \geq 0}, 1 \leq i \leq N$ are
 independent one-dimensional standard Brownian motions,
 which are different from $(B^{ij}_t)_{t\geq 0}$
 and $\big(\widetilde{B}^{ij}_t\big)_{t \geq 0}$ used
 to define $\big(S^{ij}_t\big)_{t \geq 0}$ and $\big(A^{ij}_t\big)_{t \geq0}$,
 $1 \leq i, j \leq N$.
 If~$\beta=2$ and the initial configuration is $N \delta_0$,
 that is, all $N$ particles are at the origin, then at each time $t > 0$,
 $\Lambda_t =\big(\Lambda_t^1, \dots, \Lambda_t^N\big)$
 gives a point process on $\R$ which is equal in distribution
 with the GUE eigenvalue point process with variance
 $t$ \cite{Dys62,KT04}.
 For $\beta > 0$, we call the
 solution of~(\ref{eqn:Dyson1})
 the $N$-particle system of
 \textit{Dyson's Brownian motion model}
 with parameter $\beta$ \cite{Dys62},
 and write it as
 $\big(\Lambda^{\rD(N, \beta)}_t\big)_{t \geq 0}$.
\end{Example}

\begin{Example}% \label{Example2}
 Consider an $(N+\nu) \times N$ rectangular-matrix-valued process
 given by
 \begin{gather*}
 K_t :=\big(B^{ij}_t+\sqrt{-1} \widetilde{B}^{ij}_t\big)_{1 \leq i \leq N+\nu, 1 \leq j \leq N},
 \qquad t \geq 0,
 \end{gather*}
 and define an $\sH_N$-valued process by
 \begin{gather*}
 M_t=K^{\dagger}(t) K(t), \qquad t \geq 0.
 \end{gather*}
 The matrix $M_t$ is positive semi-definite and hence
 the eigenvalues are non-negative;
 $\Lambda^i_t \in \R_+$,
 $t \geq 0$, $1 \leq i \leq N$.
 We see that the finite-variation part of $dM^{ij}_t$ is
 equal to $2(N+\nu) \delta^{ij} {\rm d}t$, $t \geq 0$,
 and
 $\langle {\rm d} M^{ij}, {\rm d} M^{k \ell} \rangle_t=2\big(M^{i \ell}_t
 \delta^{jk}+M^{k j}_t \delta^{i \ell}\big) {\rm d}t$,
 $t \geq 0$, $1 \leq i, j, k, \ell \leq N$,
 which implies that
 ${\rm d} \Upsilon^i_t=2(N+\nu) {\rm d}t$,
 $\Gamma^{ij, ji}_t=2\big(\Lambda^i_t+\Lambda^j_t\big)$,
 and $\langle {\rm d} \cM^i, {\rm d} \cM^j \rangle_t = \Gamma^{ii, jj}_t {\rm d}t
 = 4 \Lambda^i_t \delta^{ij} {\rm d}t$,
 $t \geq 0$, $1 \leq i, j \leq N$.
 Then we have the following SDEs with $\beta=2$
 for the eigenvalue process of~$(M_t)_{t \geq 0}$,
 \begin{gather}
 {\rm d} \Lambda^i_t
 = 2 \sqrt{\Lambda^i_t} {\rm d} \widetilde{B}^i_t
 + \beta \Bigg[(\nu+1) + 2 \Lambda^i_t
 \sum_{1 \leq j \leq N, j \not=i}
 \frac{1}{\Lambda^i_t-\Lambda^j_t} \Bigg] {\rm d}t,
 \qquad t \geq 0, \quad 1 \leq i \leq N,
 \label{eqn:Wishart1}
 \end{gather}
 where $\big(\widetilde{B}^i_t\big)_{t \geq 0}, 1 \leq i \leq N$ are
 independent one-dimensional standard Brownian motions,
 which are different from $\big(B^{ij}_t\big)_{t \geq 0}$
 and $\big(\widetilde{B}^{ij}_t\big)_{t \geq 0}$,
 $1 \leq i, j \leq N$, used above
 to define the rectangular-matrix-valued process
 $(K_t)_{t \geq 0}$.
 The parameter $\nu$ can be extended to $\nu > -1$,
 in which if $\nu \in (-1, 0)$, a reflecting wall
 is put at the origin~\cite{KT04}.
 We call the solution of~(\ref{eqn:Wishart1}) the $N$-particle
 system of the \textit{Bru--Wishart process}
 with parameters $(\beta, \nu)$~\cite{Bru91},
 and write it as $\big(\Lambda_t^{\rBW(N, \beta, \nu)}\big)_{t \geq 0}$.

 The positive roots of eigenvalues of $M_t$ give the
 {\it singular values} of the rectangular matrix~$K_t$,
 which are denoted by
 \begin{gather}
 \cS^i_t :=\sqrt{\Lambda^i_t},
 \qquad t \geq 0, \quad 1 \leq i \leq N.
 \label{eqn:sqrt1}
 \end{gather}
 The system of SDEs for them is readily obtained
 from (\ref{eqn:Wishart1}) as
 \begin{gather}
{\rm d} \cS^i_t = {\rm d} \widetilde{B}^i_t
 + \frac{\beta(\nu+1)-1}{2 \cS^i_t} {\rm d}t
 + \frac{\beta}{2} \sum_{1 \leq j \leq N, j \not=i}
 \bigg( \frac{1}{\cS^i_t-\cS^j_t}
 + \frac{1}{\cS^i_t+\cS^j_t} \bigg) {\rm d}t,
 \label{eqn:Wishart2}
 \end{gather}
 $t \geq 0, 1 \leq i \leq N$
 with $\beta=2$ and $\nu > -1$.
 If $\beta=2$ and the initial configuration is $N \delta_0$,
 then at each time $t > 0$, $\cS_t=\big(\cS_t^1, \dots, \cS_t^N\big)$ on $\R_+$ gives
 the \textit{chiral GUE point process} with parameter~$\nu$
 and variance $t$
 studied in random matrix theory
 for high energy physics
 \cite{ABD11,For10,SV93,Ver94,VZ93}.
 For~$\beta > 0$, we call the solution of
 (\ref{eqn:Wishart2}) the \textit{chiral version of Dyson's
 Brownian motion model} with parameters $(\beta, \nu)$,
 and write it as $\big(\cS_t^{\rchD(N, \beta, \nu)}\big)_{t \geq 0}$.
\end{Example}

%%%%%%%%%%%%%%%%%%%%%%

At each time $t >0$, the point processes
$\Lambda_t^{\rD(N, \beta)}$, $\Lambda_t^{\rBW(N, \beta, \nu)}$,
and $\cS_t^{\rchD(N, \beta, \nu)}$ are known as
typical examples of one-dimensional
\textit{log-gases} \cite{For10}.
Therefore, we will call the solutions of
the SDEs (\ref{eqn:Dyson1}), (\ref{eqn:Wishart1}),
and (\ref{eqn:Wishart2})
\textit{stochastic log-gases}.
Note that, when $\beta=2$, (\ref{eqn:Wishart1}) and
(\ref{eqn:Wishart2}) can be regarded as
the $N$-variable extensions of the
$2(\nu+1)$-dimensional squared Bessel process
and the Bessel process with parameter $\nu > -1$,
respectively \cite{Kat16}.

For
$\Lambda^{\rD(N, \beta)}_t
=\big(\Lambda^{\rD(N, \beta) \, 1}_t, \dots,
\Lambda^{\rD(N, \beta) \, N}_t\big)$, $t \geq 0$,
we regard the time evolution of empirical measures
\begin{gather*}
\Xi^{\rD(N, \beta)}_t(\cdot) :=\frac{1}{N} \sum_{i=1}^N
\delta_{\Lambda^{\rD(N, \beta) \, i}_t}(\cdot), \qquad t \in [0, T],
\end{gather*}
as an element of $\cC\big([0, T] \to \cP^0(\R)\big)$.
For
$\big(\Lambda^{\rBW(N, \beta, \nu)}_t\big)_{t \geq 0}$
and
$\big(\cS^{\rD(N, \beta, \nu)}_t\big)_{t \geq 0}$,
let
\begin{gather*}
\lambda := \frac{N+\nu}{N}\Longleftrightarrow\nu=(\lambda-1)N,
\end{gather*}
and consider
\begin{gather*}
\Xi^{\rBW(N, \beta, (\lambda-1)N)}_t(\cdot)
:=\frac{1}{N} \sum_{i=1}^N
\delta_{\Lambda^{\rBW(N, \beta, (\lambda-1)N) \, i}_t}(\cdot),
\qquad t \in [0, T]
\end{gather*}
as an element of $\cC\big([0, T] \to \cP^0(\R_+)\big)$, and
with (\ref{eqn:sqrt1}) consider
\begin{gather*}
\Sigma^{\rchD(N, \beta, (\lambda-1)N)}_t(\cdot)
:=\frac{1}{2N}\sum_{i=1}^N
\big\{\delta_{\cS^{\rchD(N, \beta, (\lambda-1)N) \, i}_t}(\cdot)
+\delta_{-\cS^{\rchD(N, \beta, (\lambda-1)N) \, i}_t}(\cdot) \big\},
\qquad t \in [0, T],
\end{gather*}
as an element of $\cC\big([0, T] \to \cP^0_{\rs}(\R)\big)$.
The following is proved~\cite{BNW13,CDG01,Cha92,RS93}.

\begin{Theorem}\label{thm:Burgers}
Assume that for any $N \in \N$,
the initial measures
$\Xi^{\rD(N, \beta)}_0$,
$\Xi^{\rBW(N, \beta, (\lambda-1)N)}_0$,
and
$\Sigma^{\rchD(N, \beta, (\lambda-1)N)}_0$ have
bounded supports, where
$\big(\Sigma^{\rchD(N, \beta, (\lambda-1)N)}_0\big)^{(2)}
=\Xi^{\rBW(N, \beta, (\lambda-1)N)}_0$
is satisfied,
and in $N \to \infty$ they converge weakly to the measures
$\rw_0 \in \cP^0(\R)$, $\rmm_{\lambda, 0} \in \cP^0(\R_+)$, and
$\rw_{\lambda, 0} \in \cP^0_{\rs}(\R)$, respectively.
Then for any fixed $T < \infty$,
\begin{gather*}
\big(\Xi^{\rD(N, \beta)}_t(\cdot)\big)_{t \in [0, T]}
\Longrightarrow (\rw_t(\cdot))_{t \in [0, T]}
\qquad \text{a.s. in}\quad \cC\big([0, T] \to \cP^0(\R)\big),
\\
\big(\Xi^{\rBW(N, \beta, (\lambda-1)N)}_t(\cdot)\big)_{t \in [0, T]}
\Longrightarrow (\rmm_{\lambda,t}(\cdot))_{t \in [0, T]}
\qquad \text{a.s. in}\quad \cC\big([0, T] \to \cP^0(\R_+)\big),
\\
\big(\Sigma^{\rchD(N, \beta, (\lambda-1)N)}_t(\cdot)\big)_{t \in [0, T]}
\Longrightarrow (\rw_{\lambda, t}(\cdot))_{t \in [0, T]}
\qquad \text{a.s. in}\quad \cC\big([0, T] \to \cP^0_{\rs}(\R)\big),
\end{gather*}
where $(\rw_t)_{t \geq 0}$,
$(\rmm_{\lambda, t})_{t \geq 0}$,
and $(\rw_{\lambda, t})_{t \geq 0}$ are
the time-dependent probability measures
defined so that their Cauchy transforms solve the PDEs
\eqref{eqn:Gwt}, \eqref{eqn:Gm}, and \eqref{eqn:Gm1/2}
under the initial probability measures
$\rw_0$, $\rmm_{\lambda, 0}$, and $\rw_{\lambda, 0}$,
respectively.
\end{Theorem}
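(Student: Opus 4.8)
The plan is to reduce each of the three statements to the convergence of empirical Cauchy transforms and to pass to the limit in the resolvent evolution obtained from It\^o's formula. Fix $z\in\C^+$ and set $G^N_t(z):=\frac{1}{N}\sum_{i=1}^N (z-\Lambda^i_t)^{-1}$, and analogously for the singular-value system with the symmetrised empirical measure $\Sigma^{\rchD}$. Applying It\^o's formula to the resolvent test function $x\mapsto (z-x)^{-1}$ along the eigenvalue SDEs of Proposition \ref{thm:Bru}, specialised as in the two Examples, yields a decomposition $dG^N_t(z)=dM^N_t(z)+{\cal D}^N_t(z)\,dt$, where $M^N$ is a martingale and ${\cal D}^N$ collects the interaction and finite-variation drifts. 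First I would check that the quadratic variation of $M^N_t(z)$ is $O(1/N)$ uniformly on compact subsets of $\C^+$ (it is bounded by $\frac{1}{N}(\operatorname{Im}z)^{-4}$ times the driving bracket, which is controlled once the supports are bounded), so that $M^N\to 0$, and that the second-order It\^o term is likewise $O(1/N)$.

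Second, I would identify the limit of the drift. For Dyson's model the interaction drift is $\frac{\beta}{2N}\sum_{i}(z-\Lambda^i_t)^{-2}\sum_{j\ne i}(\Lambda^i_t-\Lambda^j_t)^{-1}$; symmetrising the double sum in $i\leftrightarrow j$ and using the resolvent identity $(z-x)^{-1}(z-y)^{-1}=(x-y)^{-1}[(z-x)^{-1}-(z-y)^{-1}]$ turns it into a closed quadratic functional of $G^N_t$, which, under the standard space rescaling that renders the limit measure bounded, converges to $-G_{\rw_t}(z)\,\partial_z G_{\rw_t}(z)$, reproducing \eqref{eqn:Gwt}. For the Bru--Wishart and chiral systems the same symmetrisation handles the two-body term, while the one-body drifts $2(\nu+1)$ and $(\beta(\nu+1)-1)/(2\cS^i)$ produce, under the substitution $\nu=(\lambda-1)N$, the additional $z$- and $1/z$-dependent ``external-force'' terms of \eqref{eqn:Gm} and \eqref{eqn:Gm1/2}; here the relevant Bessel-type sums $\sum_i(\cS^i)^{-1}$ and $\sum_i(\cS^i)^{-2}$ are again expressed through $G$. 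A useful simplification in the chiral case is that including the mirror particles $\{\pm\cS^i\}$ absorbs the combination $\frac{1}{\cS^i-\cS^j}+\frac{1}{\cS^i+\cS^j}$ into a single log-gas interaction, so that $\Sigma^{\rchD}$ obeys a Dyson-type equation with a $1/x$ external potential, which is exactly \eqref{eqn:Gm1/2}.

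Third, tightness and identification. An a priori moment bound propagated from the bounded initial supports gives tightness of the laws of $(\Xi^N_\cdot)$ in $\cC([0,T]\to\cP^0)$, and every subsequential limit is a measure-valued path whose Cauchy transform solves the corresponding PDE in the weak sense. Since the initial data converge to $\rw_0$, $\rmm_{\lambda,0}$, $\rw_{\lambda,0}$ and the complex Burgers-type equations \eqref{eqn:Gwt}, \eqref{eqn:Gm}, \eqref{eqn:Gm1/2} have unique solutions with the normalisation $\lim_{y\uparrow\infty}\sqrt{-1}\,y\,G(\sqrt{-1}\,y)=1$, all limit points coincide and the full sequence converges; the almost-sure statement then follows by a Borel--Cantelli argument using the $O(1/N)$ martingale bound together with a concentration estimate for the empirical measure. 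Finally, for the chiral system I would transfer convergence to and from the Bru--Wishart system through the exact relation $\big(\Sigma^{\rchD}\big)^{(2)}=\Xi^{\rBW}$ and the push-forward identities established in Section \ref{sec:preliminaries}.

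The main obstacle will be the singular interaction: making the martingale/drift analysis rigorous requires that the particles neither collide nor (for the Bessel part) reach the origin, so that the sums $\sum_{j\ne i}(\Lambda^i-\Lambda^j)^{-1}$ and $\sum_i(\cS^i)^{-1}$ are integrable in time and the empirical Cauchy transform is a genuine semimartingale. This is where the hypotheses $\beta>0$ and $\nu>-1$ enter, through a priori non-collision and non-hitting estimates for the log-gas SDEs that must be secured before the limit is taken. The remaining difficulty, well-posedness of the limiting PDEs, I would settle using the explicit solutions obtained by the method of characteristic curves in Section \ref{sec:solution}, which yield uniqueness directly.
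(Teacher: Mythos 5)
The paper does not actually prove Theorem \ref{thm:Burgers}: it is stated as a known result with the attribution ``The following is proved \cite{BNW13,CDG01,Cha92,RS93}'', so there is no in-paper argument to compare yours against. Your sketch reproduces, in outline, the standard strategy of those references --- It\^o's formula applied to the empirical Cauchy transform (or to resolvent/moment test functions), symmetrisation of the singular two-body drift via the resolvent identity, an $O(1/N)$ control of the martingale and second-order It\^o terms, tightness in $\cC\big([0,T]\to\cP^0\big)$, identification of subsequential limits as solutions of \eqref{eqn:Gwt}, \eqref{eqn:Gm}, \eqref{eqn:Gm1/2}, and uniqueness of those solutions --- and as a roadmap it is sound.

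Three places where the sketch is thinner than it needs to be. First, the rescaling: as literally written, the SDEs \eqref{eqn:Dyson1}, \eqref{eqn:Wishart1}, \eqref{eqn:Wishart2} produce configurations spreading on the scale $\sqrt{N}$ (resp.\ $N$ for the Wishart eigenvalues), so the empirical measures $\frac1N\sum_i\delta_{\Lambda^i_t}$ do not converge to compactly supported limits without the space--time rescaling you mention only parenthetically; that rescaling is also precisely what makes the $\beta$-dependence disappear, as noted after the theorem, and it should be made explicit since the statement is false without it. Second, uniqueness of the limiting PDE: the method of characteristics in Section~\ref{sec:solution} produces a functional equation that any solution must satisfy, but showing that this functional equation admits a unique solution which is the Cauchy transform of a $\cP^0$-valued continuous path (so that all subsequential limits coincide) is a separate analytic-subordination-type argument (cf.\ Remark~\ref{Remark2}); ``yields uniqueness directly'' oversells what Proposition~\ref{thm:GD} gives you. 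Third, the almost-sure statement: an $O(1/N)$ variance bound yields probabilities that are not summable, so Borel--Cantelli requires an exponential martingale or concentration inequality; you name this but should treat it as a required ingredient rather than an afterthought. None of these is a wrong turn, but each is a genuine piece of work in the cited proofs.
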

%%%%%%%%%%%%%%%%%%%%%%%%%%%%%%%%

Note that dependence on the parameter $\beta$
vanishes in the limit $N \to \infty$.

By the construction mentioned above,
the relation
\begin{gather*}
\big(\Sigma^{\rchD(N, \beta, (\lambda-1)N)}_t\big)^{(2)}
\dist= \Xi^{\rBW(N, \beta, (\lambda-1)N)}_t,
\qquad t \geq 0,
\end{gather*}
holds,
and then Theorem \ref{thm:Burgers} proves
the equality
$\rw_{\lambda, t}^{(2)} = \rmm_{\lambda, t}$, $t \geq 0$.
This is consistent with the definition
(\ref{eqn:def_rmm}).\pagebreak

%%%%%%%%%%%%%%%%%%%%%%%%%%%%%
\subsection{Expressions of second push-forward measures}
%\label{sec:mu2}
%%%%%%%%%%%%%%%%%%%%%%%%%%%

We will prove the following.
%%%%%%%%%%%%%%%%%%%%%%%%%%%%%%%%%%
\begin{Lemma}
\label{thm:mu2}
For $\mu \in \cP^0_{\rs}(\R)$ and $\nu \in \cP^0(\R_+)$,
the following four statements are equivalent with each other,
\begin{gather*}
(i)\quad \mu^{(2)}=\nu,
\\
(ii)\quad G_{\mu}(z) = z G_{\nu}\big(z^2\big),
\\
(iii)\quad R_{\mu}(z)= R_{\nu} \bigg( \dfrac{z^2}{R_{\mu}(z)+1} \bigg),
\\
(iv)\quad S_{\mu}(z) = S_{\rd}(z) \sqrt{S_{\nu}(z)}.
\end{gather*}
\end{Lemma}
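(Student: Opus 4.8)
The plan is to prove the chain $(i)\Leftrightarrow(ii)\Leftrightarrow(iii)$ together with $(ii)\Leftrightarrow(iv)$, which makes all four statements equivalent. Since $\mu$ and $\nu$ have bounded supports, their Cauchy transforms, moment generating functions, and $R$- and $S$-transforms are honest analytic functions on the relevant domains; I will therefore derive each identity first on a convenient subdomain (a punctured neighbourhood of $0$ or of $\infty$, or a half-plane) and then extend it by the identity theorem, recovering measures uniquely through Stieltjes inversion. For $(i)\Leftrightarrow(ii)$ I would first use the symmetry $\mu(B)=\mu(-B)$: replacing $x$ by $-x$ in the integral defining $G_\mu$ and averaging gives $G_\mu(z)=\int_\R \frac{z}{z^2-x^2}\,\mu({\rm d}x)$. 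On the other hand, the push-forward formula (\ref{eqn:mu_p}) gives $G_{\mu^{(2)}}(w)=\int_\R \frac{\mu({\rm d}x)}{w-x^2}$, whence $z\,G_{\mu^{(2)}}(z^2)=\int_\R\frac{z}{z^2-x^2}\,\mu({\rm d}x)=G_\mu(z)$. This is exactly $(ii)$ with $\nu=\mu^{(2)}$, proving $(i)\Rightarrow(ii)$; conversely, if $(ii)$ holds then $G_\nu$ and $G_{\mu^{(2)}}$ agree on an open set, hence everywhere, so $\nu=\mu^{(2)}$.

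For $(ii)\Leftrightarrow(iii)$ the device is the inversion relation (\ref{eqn:R1}). Put $w:=G_\mu^{\bra -1\ket}(z)=(R_\mu(z)+1)/z$, so that $G_\mu(w)=z$, and set $u:=z/w=z^2/(R_\mu(z)+1)$. Relation $(ii)$ then reads $z=G_\mu(w)=w\,G_\nu(w^2)$, i.e.\ $G_\nu(w^2)=u$, so $w^2=G_\nu^{\bra -1\ket}(u)=(R_\nu(u)+1)/u$ by (\ref{eqn:R1}) for $\nu$. Inserting $w=(R_\mu(z)+1)/z$ and $u=z^2/(R_\mu(z)+1)$ and cancelling the factor $R_\mu(z)+1$ (which is nonzero near $z=0$, where it tends to $1$) leaves $R_\mu(z)=R_\nu(u)$, that is, $(iii)$. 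Each step is an equivalence, so reading the chain backwards gives $(iii)\Rightarrow(ii)$.

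For $(ii)\Leftrightarrow(iv)$ I would route through the moment generating function. Substituting $z\mapsto 1/z$ in $(ii)$ and using (\ref{eqn:Psi1}) for both $\mu$ and $\nu$ collapses to the clean identity $\Psi_\mu(z)=\Psi_\nu(z^2)$ (equivalently, the even moments of $\mu$ are the moments of $\nu$). Inverting via (\ref{eqn:chi_Psi1}): if $y=\Psi_\mu(z)=\Psi_\nu(z^2)$ then $\chi_\mu(y)=z$ and $\chi_\nu(y)=z^2$, so $\chi_\mu(y)^2=\chi_\nu(y)$. Feeding this into the definition (\ref{eqn:S_def1}) of the $S$-transform and using $S_{\rd}(z)=\sqrt{(1+z)/z}$ (from (\ref{eqn:Sda}) with $a=1$) yields $S_\mu(y)=\frac{1+y}{y}\chi_\mu(y)=\sqrt{\frac{1+y}{y}}\sqrt{S_\nu(y)}=S_{\rd}(y)\sqrt{S_\nu(y)}$, which is $(iv)$; the computation reverses once the branch is fixed.

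The main obstacle will be the square-root branch in this last step: $\chi_\mu(y)^2=\chi_\nu(y)$ pins down $\chi_\mu$ only up to sign, and I must verify that the branch selected by the paper's normalisation of $\chi_\mu$ (the inverse of $\Psi_\mu$ mapping into the prescribed region $H$, resp.\ $\sqrt{-1}\C^+$) carries the $+$ sign, for instance by matching the asymptotics of both sides as $y\to 0$. This is precisely where the two $S$-transforms $S_\mu$ and $\widetilde{S}_\mu$ are distinguished. Everything else is routine analytic bookkeeping: the domains on which each identity is first valid, the standing hypothesis $\mu(\{0\})<1$ (equivalently $\nu(\{0\})<1$) needed for the $S$-transform to exist, and the continuation arguments that carry each relation to its full domain.
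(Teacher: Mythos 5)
Your proposal is correct and follows essentially the same route as the paper: the symmetry of $\mu$ yields $(i)\Leftrightarrow(ii)$, and $(iii)$ and $(iv)$ are obtained by evaluating $(ii)$ at the inverse of $G_\mu$ (respectively of $\Psi_\mu$, via $\Psi_\mu(z)=\Psi_\nu\big(z^2\big)$ and $\chi_\mu(y)^2=\chi_\nu(y)$) and unwinding the definitions (\ref{eqn:R1}), (\ref{eqn:Psi1}), (\ref{eqn:chi_Psi1}), (\ref{eqn:S_def1}) together with (\ref{eqn:Sda}). The only differences are cosmetic and mildly in your favour: you work directly with measures where the paper assumes densities for $(i)\Leftrightarrow(ii)$, and you flag the branch choice for $\chi_\mu$ (the distinction between $S_\mu$ and $\widetilde{S}_\mu$) that the paper leaves implicit.
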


\begin{proof} Assume that $\mu({\rm d}x)$ (resp.~$\nu({\rm d}x)$) has
 a probability density function $\rho_{\mu}(x)$ (resp.~$\rho_{\nu}(x)$).
 Let $B \in \cB((0, \infty))$. Then
 \begin{gather*}
 \nu(B) = \int_{\R} 1_B(x) \rho_{\nu}(x)\, {\rm d}x
= \int_{\R_+} 1_B\big(x^2\big) \rho_{\nu}\big(x^2\big)\, {\rm d}x^2
 =\int_{\R} 1_B\big(x^2\big) \rho_{\nu}\big(x^2\big) |x| {\rm d}x.
 \end{gather*}
 On the other hand, by definition (\ref{eqn:mu_p}),
 $\mu^{(2)}(B)=\int_{\R} 1_B\big(x^2\big) \rho_{\mu}(x)\, {\rm d}x$.
 Hence
 \begin{gather}
(i) \iff \rho_{\mu}(x)=\rho_{\nu}\big(x^2\big) |x|, \qquad x \in \R.
 \label{eqn:rho1}
 \end{gather}
 Since (\ref{eqn:rho1}) implies
 the symmetry $\rho_{\mu}(-x)=\rho_{\mu}(x)$, $x \in \R$,
 we see that
 \begin{align*}
 G_{\mu}(z) &= \int_{\R} \frac{\rho_{\mu}(x)}{z-x} {\rm d}x
= \frac{1}{2} \bigg\{
 \int_{\R} \frac{\rho_{\mu}(-x)}{z-x} {\rm d}x
 +\int_{\R} \frac{\rho_{\mu}(x)}{z-x} {\rm d}x \bigg\}
 \\[1ex]
 &= \frac{1}{2} \bigg\{
 \int_{\R} \frac{\rho_{\mu}(x)}{z+x} {\rm d}x
 +\int_{\R} \frac{\rho_{\mu}(x)}{z-x} {\rm d}x \bigg\}
 =z \int_{\R} \frac{\rho_{\mu}(x)}{z^2-x^2} {\rm d}x.
 \end{align*}
 Then when (\ref{eqn:rho1}) is satisfied,
 \begin{gather*}
 G_{\mu}(z) = z \int_{\R} \frac{\rho_{\nu}\big(x^2\big) |x|}{z^2-x^2} {\rm d}x
 =z \int_{\R_+} \frac{\rho_{\nu}\big(x^2\big)}{z^2-x^2} {\rm d}x^2
 = z G_{\nu}\big(z^2\big) \iff (ii).
 \end{gather*}
 When $(ii)$ is satisfied,
 \begin{gather*}
 G_{\mu} \bigg( \frac{R_{\mu}(z)+1}{z} \bigg)
 =\frac{R_{\mu}(z)+1}{z}
 G_{\nu} \bigg( \bigg( \frac{R_{\mu}(z)+1}{z} \bigg)^2 \bigg)
 \end{gather*}
 holds. By (\ref{eqn:R1}), this implies
 \begin{align*}
z&= \frac{R_{\mu}(z)+1}{z}
 G_{\nu} \bigg( \bigg( \frac{R_{\mu}(z)+1}{z} \bigg)^2 \bigg)
 \iff G_{\nu} \bigg( \bigg( \frac{R_{\mu}(z)+1}{z} \bigg)^2 \bigg)
 = \frac{z^2}{R_{\mu}(z)+1}
 \\
& \iff \bigg( \frac{R_{\mu}(z)+1}{z} \bigg)^2
 =G_{\nu}^{\bra -1 \ket}
 \bigg( \frac{z^2}{R_{\mu}(z)+1} \bigg)
 = \frac{R_{\mu}(z)+1}{z^2}
 \bigg\{ R_{\nu} \bigg( \frac{z^2}{R_{\mu}(z)+1} \bigg)+1 \bigg\},
 \end{align*}
 where we used (\ref{eqn:R1}) again.
 This is equivalent with $(iii)$.

 When $(ii)$ is satisfied,
 \begin{gather}
 G_{\mu} \bigg( \frac{z+1}{z S_{\mu}(z)} \bigg)
 = \frac{z+1}{z S_{\mu}(z)}
 G_{\nu} \bigg( \bigg(\frac{z+1}{z S_{\mu}(z)} \bigg)^2 \bigg)
 \label{eqn:Eq1}
 \end{gather}
 holds.
 By (\ref{eqn:Psi1}), (\ref{eqn:chi_Psi1}), and (\ref{eqn:S_def1}),
 we can prove the equality
 \begin{gather*}
 G_{\mu} \bigg(\frac{z+1}{z S_{\mu}(z)} \bigg) = z S_{\mu}(z).
 \end{gather*}
 Then (\ref{eqn:Eq1}) gives
 \begin{gather}
 G_{\nu} \bigg(\bigg( \frac{z+1}{z S_{\mu}(z)} \bigg)^2 \bigg)
 = (z+1) \bigg( \frac{z S_{\mu}(z)}{z+1} \bigg)^2.
 \label{eqn:Eq2}
 \end{gather}
 By (\ref{eqn:Psi1}),
 the l.h.s.\ of (\ref{eqn:Eq2}) is equal to
 $\big[ \Psi_{\nu}\big(\{z S_{\mu}(z)/(z+1)\}^2\big)
 +1\big] \{z S_{\mu}(z)/(z+1)\}^2$.
 Hence we obtain the equalities
 $z= \Psi_{\nu}\big(\{z S_{\mu}(z)/(z+1)\}^2\big)$
 $\iff
 \chi_{\nu}(z)=\{z S_{\mu}(z)/(z+1) \}^2$.
 By (\ref{eqn:S_def1}), this gives
 \begin{gather*}
 S_{\mu}(z)^2=\frac{1+z}{z} S_{\nu}(z).
 \end{gather*}
 We use (\ref{eqn:Sda}) with $a=1$ and then
 $(iv)$ is obtained.
 Hence the proof is complete.
\end{proof}

\section{General solutions of complex Burgers-type equations}\label{sec:solution}

Let $t \in [0, \infty)$ and $z \in \C^+$ be
independent variables and consider a PDE
for a complex function $g=g(t,z) \in \C$ in the form,
\begin{gather}
A(t, z, g) \frac{\partial g}{\partial t}
+B(t, z, g) \frac{\partial g}{\partial z}
=C(t, z,g).
\label{eqn:PDE1}
\end{gather}
We regard the solution of (\ref{eqn:PDE1}) as a surface
$g=g(t, z)$ in the space $[0, \infty) \times \C^+ \times \C$.
Then (\ref{eqn:PDE1}) is interpreted as a
geometrical statement that the vector field
$(A(t,z,g), B(t, z, g), C(t, z, g))$ is tangent to the
surface at every point.
This statement means that the graph of solution is given by
a union of integral curves of this vector field.
They are called the \textit{characteristic curves} \cite{CH62} of (\ref{eqn:PDE1})
and satisfy the \textit{Lagrange--Charpit equatoin}
(see \cite{Del97} and references therein),
\begin{gather}
\frac{{\rm d}t}{A(t, z, g)}
=\frac{{\rm d}z}{B(t, z, g)}
=\frac{{\rm d}g}{C(t, z, g)}.
\label{eqn:LC1}
\end{gather}
Here we consider the special case such that
$A(t,z,g) \equiv 1$.
Then (\ref{eqn:LC1}) is written as
\begin{gather}
\begin{cases}
\displaystyle{\frac{{\rm d}z}{{\rm d}t} =B(t, z, g)},
\\[2ex]
\displaystyle{\frac{{\rm d}g}{{\rm d}t} =C(t, z, g)}.
\end{cases}
\label{eqn:eqs1}
\end{gather}
We will show that
the solutions of (\ref{eqn:eqs1}) for
(\ref{eqn:Gwt}), (\ref{eqn:Gm1/2}) and (\ref{eqn:Gm})
are obtained in the forms of functional equations.

%%%%%%%%%%%%%%%%%%%%%
\begin{Proposition}
\label{thm:GD}\quad
\begin{enumerate}\itemsep=0pt
\item[$(i)$]
Given the Cauchy transform $G_{\rw_0}(z)$ of
the initial measure $\rw_0 \in \cP^0(\R)$,
the solution of~\eqref{eqn:Gwt} satisfies the
functional equation,
\begin{gather}
G_{\rw_t}(z)=G_{\rw_0}(z-t G_{\rw_t}(z)),
\qquad t \geq 0.
\label{eqn:GD_func}
\end{gather}

\item[$(ii)$]
Given the Cauchy transform $G_{\rmm_{\lambda, 0}}(z)$
of the initial measure $\rmm_{\lambda, 0} \in \cP^0(\R_+)$,
the solution of \eqref{eqn:Gm} satisfies the
functional equation,
\begin{gather}
\frac{1}{G_{\rmm_{\lambda,t}}(z)}=
t + \frac{1}{G_{\rmm_{\lambda, 0}} (
(1-t G_{\rmm_{\lambda,t}}(z)) \{(1-\lambda)t
+(1-t G_{\rmm_{\lambda,t}}(z))z \}
)}, \quad t \geq 0.
\label{eqn:GBW_func}
\end{gather}

\item[$(iii)$]
Given the Cauchy transform
$G_{\rw_{\lambda, 0}}(z)$
of the initial measure $\rw_{\lambda, 0} \in \cP^0_{\rs}(\R)$,
the solution of~\eqref{eqn:Gm1/2} satisfies the
functional equation,
\begin{gather}
\frac{1}{G_{\rw_{\lambda,t}}(z)}
\!=\!\frac{t}{z}
\!+\! \frac{\sqrt{(1\!-\!\frac{t}{z} G_{\rw_{\lambda,t}}(z))
\{(1\!-\!\lambda)t \!+\!(1\!-\!\frac{t}{z} G_{\rw_{\lambda,t}}(z))z^2 \}}
}{zG_{\rw_{\lambda, 0}} \Big(
\sqrt{\big(1\!-\!\frac{t}{z} G_{\rw_{\lambda,t}}(z)\big)
\{(1\!-\!\lambda)t \!+\!\big(1\!-\!\frac{t}{z} G_{\rw_{\lambda,t}}(z)\big)z^2 \}}
\Big)}, \ \, t \geq 0.\!
\label{eqn:sGBW_func}
\end{gather}
\end{enumerate}
\end{Proposition}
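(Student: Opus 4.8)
The plan is to apply the method of characteristic curves set up in (\ref{eqn:eqs1}) to each of the three PDEs in turn: I read off $B$ and $C$ from the given equation, solve the resulting system for a characteristic $(z(t),g(t))$, and then eliminate the initial point $z(0)=:z_0$ in favour of the current data $(t,z,g(t,z))$ using $g_0:=g(0)=G_{\mu_0}(z_0)$, where $\mu_0$ is the relevant initial measure. For part $(i)$, equation (\ref{eqn:Gwt}) has $B=g$ and $C=0$, so $g$ is constant along each characteristic, giving $g(t)=g_0=G_{\rw_0}(z_0)$, while $\rd z/\rd t=g$ is then constant and yields $z(t)=z_0+tg$. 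Hence $z_0=z-t\,G_{\rw_t}(z)$, and substituting into $g=G_{\rw_0}(z_0)$ produces (\ref{eqn:GD_func}).

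For part $(ii)$, equation (\ref{eqn:Gm}) has $B=2zg-(1-\lambda)$ and $C=-g^2$. First I would integrate $\rd g/\rd t=-g^2$ to get $1/g(t)=t+1/g_0$, equivalently $1+g_0 t=1/(1-tg)$ with $g=g(t)$. Then $z(t)$ satisfies the linear ODE $\rd z/\rd t-2g(t)z=-(1-\lambda)$, which I would solve with the integrating factor $(1+g_0 t)^{-2}$; the elementary integral $\int_0^t(1+g_0 s)^{-2}\,\rd s=t/(1+g_0 t)$ then gives $z_0=(1-tg)\{(1-\lambda)t+(1-tg)z\}$ after using $1+g_0 t=1/(1-tg)$. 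Combining $1/g=t+1/G_{\rmm_{\lambda,0}}(z_0)$ with this expression for $z_0$ yields exactly (\ref{eqn:GBW_func}).

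For part $(iii)$, equation (\ref{eqn:Gm1/2}) has $B=g-(1-\lambda)/(2z)$ and $C=-(1-\lambda)g/(2z^2)$; solving this coupled system directly is awkward because $z$ and $g$ no longer separate. The key observation is that the substitution $w=z^2$, $h=g/z$ transforms the characteristic system of $(iii)$ into that of $(ii)$: a direct computation gives $\rd w/\rd t=2wh-(1-\lambda)$ and $\rd h/\rd t=-h^2$. More transparently, I would instead invoke the identity $\rw_{\lambda,t}^{(2)}=\rmm_{\lambda,t}$ together with Lemma \ref{thm:mu2}$(ii)$, which gives $G_{\rw_{\lambda,t}}(z)=z\,G_{\rmm_{\lambda,t}}(z^2)$ at every time, including $t=0$. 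Writing $h=G_{\rw_{\lambda,t}}(z)/z=G_{\rmm_{\lambda,t}}(z^2)$ and applying part $(ii)$ at the point $z^2$ gives $1/h=t+1/G_{\rmm_{\lambda,0}}(w_0)$ with $w_0=(1-th)\{(1-\lambda)t+(1-th)z^2\}$; finally rewriting $G_{\rmm_{\lambda,0}}(w_0)=G_{\rw_{\lambda,0}}(\sqrt{w_0})/\sqrt{w_0}$ by Lemma \ref{thm:mu2}$(ii)$ at $t=0$ and dividing by $z$ converts this into (\ref{eqn:sGBW_func}).

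The ODE integrations are routine; the one genuine obstacle is part $(ii)$, namely solving the linear equation for $z(t)$ and then algebraically eliminating $g_0$ and $z_0$ so that the answer is expressed solely through $t$, $z$, and $G_{\rmm_{\lambda,t}}(z)$. In part $(iii)$ the only additional care needed is the consistent choice of the square-root branch, which is legitimate because $\rw_{\lambda,t}$ is symmetric; after that the reduction to $(ii)$ is purely formal.
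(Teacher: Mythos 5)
Your proposal is correct and follows essentially the same route as the paper: parts $(i)$ and $(ii)$ by the method of characteristics (your integrating-factor integration of the linear ODE for $z(t)$ is just a variant of the paper's direct determination of the integration constant at $t=0$, and yields the same expression $z_0=(1-tg)\{(1-\lambda)t+(1-tg)z\}$), and part $(iii)$ by substituting $G_{\rw_{\lambda,t}}(z)=zG_{\rmm_{\lambda,t}}\big(z^2\big)$ from Lemma~\ref{thm:mu2}$(ii)$ into the functional equation of part $(ii)$, which is exactly the paper's one-line proof. Your additional observation that the change of variables $w=z^2$, $h=g/z$ maps the characteristic system of \eqref{eqn:Gm1/2} onto that of \eqref{eqn:Gm} is a correct (and nice) consistency check, but not a departure from the paper's argument.
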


%%%%%%%%%%%%%%%%%%%%%
\begin{proof}
 $(i)$ Consider the PDE (\ref{eqn:Gwt})
 for $g(t,z)=G_{\rw_t}(z)$.
 In this case (\ref{eqn:eqs1}) becomes
 \begin{gather}
 \frac{{\rm d}z(t)}{{\rm d}t} =g(t, z(t)),
 \label{eqn:GD_1}
 \\
\frac{{\rm d}g(t, z(t))}{{\rm d}t} =0.
 \label{eqn:GD_2}
 \end{gather}
 By (\ref{eqn:GD_2}), we can conclude that
 \begin{gather}
 g(t, z(t))=g(0, z(0)) \qquad \forall t \geq 0.
 \label{eqn:GD_3}
 \end{gather}
 Therefore, (\ref{eqn:GD_1}) is integrated as
 \begin{gather*}
 z(t) = z(0)+t g(0, z(0))
 = z(0)+ t g(t, z(t))
\Longleftrightarrow
z(0)=z(t)-t g(t, z(t)), \qquad t \geq 0.
 \end{gather*}
 Inserting this into (\ref{eqn:GD_3}), we obtain
 (\ref{eqn:GD_func}) for $g(t,z)=G_{\rw_t}(z)$.

 $(ii)$ Consider the PDE (\ref{eqn:Gm})
 for $g(t,z)=G_{\rmm_{\lambda, t}}(z)$.
 In this case (\ref{eqn:eqs1}) becomes
 \begin{align}
 &\frac{{\rm d}z(t)}{{\rm d}t} =2z g(t, z(t))-(1-\lambda),
 \label{eqn:GBW_1}
 \\
 &\frac{{\rm d}g(t, z(t))}{{\rm d}t} =-g(t, z(t))^2.
 \label{eqn:GBW_2}
 \end{align}
 The solution of (\ref{eqn:GBW_2}) is given by
 \begin{gather}
 g(t, z(t))=\frac{1}{t+1/g(0, z(0))}.
 \label{eqn:GBW_3}
 \end{gather}
 Then (\ref{eqn:GBW_1}) is written as
 \begin{gather*}
 \frac{{\rm d}z(t)}{{\rm d}t} = \frac{2 z(t)}{t+1/g(0, z(0))} - (1-\lambda).
 \end{gather*}
 This is integrated as
 \begin{gather}
 z(t)= \bigg( t + \frac{1}{g(0, z(0))} \bigg)
 \bigg\{ 1- \lambda
 + C \bigg( t + \frac{1}{g(0, z(0))} \bigg) \bigg\},
 \label{eqn:GBZ_4}
 \end{gather}
 where $C$ is an integral constant.
 By setting $t=0$ in this equation, we see that
 \begin{gather*}
 C=g(0, z(0)) \{ z(0) g(0, z(0)) - (1-\lambda) \}.
 \end{gather*}
 Using this and (\ref{eqn:GBW_3}),
 (\ref{eqn:GBZ_4}) is rewritten as
 \begin{gather*}
 z(t)=\frac{1}{g(t, z(t))}
 +\frac{\lambda t g(t, z(t))-1}{(1-t g(t, z(t))) g(t, z(t))}
 +\frac{z(0)}{(1-t g(t, z(t)))^2},
 \end{gather*}
 which gives
 \begin{gather*}
 z(0)=(1-t g(t, z(t)))
 \{(1-\lambda)t +(1-t g(t, z(t))) z(t) \}, \qquad t \geq 0.
 \end{gather*}
 If we insert this expression of $z(0)$ into (\ref{eqn:GBW_3})
 and replace $z(t)$ by $z$,
 $g(t, z(t))$ by $G_{\rmm_{\lambda, t}}(z)$,
 and~$g(0, \cdot)$ by $G_{\rmm_{\lambda, 0}}(\cdot)$, then
 we obtain (\ref{eqn:GBW_func}).

 $(iii)$ By Lemma \ref{thm:mu2}$(ii)$, (\ref{eqn:GBW_func})
 is transformed into (\ref{eqn:sGBW_func}).
\end{proof}

\section{Proofs}\label{sec:proof}

\subsection{Proof of Theorem \ref{thm:main}}\label{sec:proof_main}

\hspace*{5mm}$(i)$ We put $z=G^{\bra -1 \ket}_{\rw_t}(\zeta)$ in
(\ref{eqn:GD_func}). Then we have
$\zeta=G_{\rw_0}\big(G^{\bra -1 \ket}_{\rw_t}(\zeta)-t\zeta\big)$.
Next we apply~$G^{\bra -1 \ket}_{\rw_0}$ on both sides
and obtain
\begin{gather*}
G^{\bra -1 \ket}_{\rw_0}(\zeta)
=G^{\bra -1 \ket}_{\rw_t}(\zeta)-t\zeta
\Longleftrightarrow
\zeta G^{\bra -1 \ket}_{\rw_0}(\zeta) -1
=\big(\zeta G^{\bra -1 \ket}_{\rw_t}(\zeta)-1\big)-t\zeta^2.
\end{gather*}
By the definition (\ref{eqn:R1}),
this implies the following equation between $R$-transforms
\begin{gather*}
R_{\rw_0}(\zeta)=R_{\rw_t}(\zeta)-t \zeta^2, \qquad t \geq 0.
\end{gather*}
The assertion $(i)$ of Theorem \ref{thm:main} is concluded
by the well-known result \cite{HP00},
$R_{\rw^0_t}(z)= t z^2$.

$(ii)$
We put $z=G^{\bra -1 \ket}_{\rmm_{\lambda,t}}(\zeta)$ in
(\ref{eqn:GBW_func}). Then we have
\begin{align*}
\frac{1}{\zeta}
&= t + \frac{1}{
G_{\rmm_{\lambda,0}}\big((1-t \zeta) \big\{(1-\lambda) t +(1-t \zeta)
G^{\bra -1 \ket}_{\rmm_{\lambda,t}}(\zeta)\big\}\big)}
\nonumber\\
 &\Longleftrightarrow
G_{\rmm_{\lambda, 0}}\big((1-t \zeta) \big\{(1-\lambda) t +(1-t \zeta)
G^{\bra -1 \ket}_{\rmm_{\lambda,t}}(\zeta) \big\} \big)
=\frac{\zeta}{1-t \zeta}.
\end{align*}
We apply $G^{\bra -1 \ket}_{\rmm_{\lambda, 0}}$
on both sides and obtain
\begin{gather*}
(1-t \zeta) \big\{(1-\lambda) t +(1-t \zeta)
G^{\bra -1 \ket}_{\rmm_{\lambda,t}}(\zeta) \big\}
=G^{\bra -1 \ket}_{\rmm_{\lambda, 0}} \bigg( \frac{\zeta}{1-t \zeta} \bigg)
\\ \qquad
 {}\Longleftrightarrow
-\lambda \zeta t+(1-t \zeta) \big[
\zeta G^{\bra -1 \ket}_{\rmm_{\lambda, t}}(\zeta) -1 \big]
=\frac{\zeta}{1-t \zeta}
G^{\bra -1 \ket}_{\rmm_{\lambda, 0}} \bigg( \frac{\zeta}{1-t \zeta} \bigg)-1.
\end{gather*}
By the definition (\ref{eqn:R1}),
this implies the following equations between $R$-transforms,
\begin{align*}
- \lambda t z
+(1 - tz) R_{\rmm_{\lambda, t}}(z)
&=R_{\rmm_{\lambda, 0}} \bigg( \frac{z}{1-tz} \bigg)
\\
 &\Longleftrightarrow R_{\rmm_{\lambda, t}}(z)
=\frac{1}{1-t z} R_{\rmm_{\lambda, 0}} \bigg( \frac{z}{1-tz} \bigg)
+ \frac{\lambda t z}{1-t z}.
\end{align*}
Since
$R_{\rmm^0_{\lambda, t}}(z)= \lambda t z/(1 - t z)$
\cite{BNW13,HP00},
the assertion $(ii)$ is proved.

$(iii)$
Applying Lemma \ref{thm:mu2}$(iii)$ to (\ref{eqn:def_rmm}),
(\ref{eqn:W_Burgers_R_1}) gives
(\ref{eqn:chiral_Burgers_R_1}).

Hence the proof of Theorem \ref{thm:main} is complete.

\subsection{Proof of Theorem \ref{thm:main2}}

\hspace*{5mm}$(i)$ Since $\rmm_{\lambda, t} \in \cP^0(\R_+)$ is
defined by (\ref{eqn:def_rmm}), Lemma \ref{thm:mu2}$(iv)$
proves the assertion $(i)$.

$(ii)$
We start from (\ref{eqn:GD_func})
in Proposition \ref{thm:GD}$(i)$.
Replace $z$ by $1/z$ and then apply (\ref{eqn:Psi1}).
We~have
\begin{gather*}
z(\Psi_{\rw_t}(z)+1)
=
\frac{1}
{1/z-tz(\Psi_{\rw_t}(z)+1)}
\bigg\{ \Psi_{\rw_0} \bigg(
\frac{1}{1/z-t z(\Psi_{\rw_t}(z)+1)} \bigg)+1 \bigg\}.
\end{gather*}
Put $z=\Psi^{\bra -1 \ket}_{\rw_t}(\zeta)
=: \chi_{\rw_t}(\zeta)$. Then
\begin{gather*}
\chi_{\rw_t}(\zeta) (\zeta+1)
=
\frac{1}
{1/\chi_{\rw_t}(\zeta) - t \chi_{\rw_t}(\zeta) (\zeta+1)}
\bigg\{ \Psi_{\rw_0} \bigg(
\frac{1}{1/\chi_{\rw_t}(\zeta)
-t \chi_{\rw_t}(\zeta)(\zeta+1)} \bigg)+1 \bigg\}.
\end{gather*}
By (\ref{eqn:S_def1}),
the above is written as follows,
\begin{align*}
\zeta S_{\rw_t}(\zeta)&=
\frac{1}{\frac{\zeta+1}{\zeta S_{\rw_t}(\zeta)}
 - t \zeta S_{\rw_t}(\zeta)}
\bigg\{ \Psi_{\rw_0} \bigg(
\frac{1}{\frac{\zeta+1}{\zeta S_{\rw_t}(\zeta)}
-t \zeta S_{\rw_t}(\zeta)} \bigg)+1 \bigg\}
\\
&\Longleftrightarrow
\zeta \big\{ 1 - t \zeta S_{\rw_t}(\zeta)^2 \big\}
= \Psi_{\rw_0} \bigg(\frac{\zeta S_{\rw_t}(\zeta)}
{\zeta+1-t \zeta^2 S_{\rw_t}(\zeta)^2} \bigg).
\end{align*}
Now we apply $\chi_{\rw_0}$ on both sides
and obtain
\begin{gather*}
\chi_{\rw_0}\big(\zeta \big\{1- t \zeta S_{\rw_t}(\zeta)^2\big\}\big)
=\frac{\zeta S_{\rw_t}(\zeta)}{\zeta+1
-t \zeta^2 S_{\rw_t}(\zeta)^2}.
\end{gather*}
Again we use (\ref{eqn:S_def1}) and
replace the variable $\zeta$ by $z$.
Then we obtain
\begin{gather*}
\frac{S_{\rw_t}(z)}{1- t z S_{\rw_t}(z)^2}
=S_{\rw_0} \big(
z \big( 1- t z S_{\rw_t}(z)^2 \big) \big),
\qquad t \geq 0.
\end{gather*}
which is written as (\ref{eqn:S_time_Dyson})
by (\ref{eqn:S_wt}).

$(iii)$
We can prove (\ref{eqn:S_time_Wishart}) similarly to~(\ref{eqn:S_time_Dyson}) as shown above.

$(iv)$ By Lemma \ref{thm:mu2}$(iv)$, (\ref{eqn:S_time_Wishart})
is transformed to (\ref{eqn:S_time_sqrWishart}).

Hence the proof of Theorem \ref{thm:main2} is complete.

%%%%%%%%%%%%%%%%%%%%%%%%%%%%%%%%%%%%%%%%%%
\subsection{Proof of Propisition \ref{thm:wa}}
%\label{sec:proof_cor_wa}
%%%%%%%%%%%%%%%%%%%%%%%%%%%%

It is easy to verify that
\begin{gather*}
R_{\rw_t^0}(z) =t z^2,
\\
R_{\rd_a}(z)=\frac{1}{2} \Big[ \sqrt{1+4 a^2 z^2} -1 \Big].
\end{gather*}
Then (\ref{eqn:R_wa}) is immediately concluded
from Theorem \ref{thm:main}$(i)$.
We have already obtained $S_{\rd_a}(z)$ as
(\ref{eqn:Sda}) and
$S_{\rw_t^0}(z)$ as (\ref{eqn:S_wt}).
Then (\ref{eqn:S_time_Dyson})
of Theorem \ref{thm:main2}$(ii)$ gives
\begin{gather*}
\frac{S_{\rw_t}(z)}{1-t z S_{\rw_t}(z)^2}
= \frac{1}{a}
\sqrt{\frac{z+1-tz^2 S_{\rw_t}(z)^2}
{z-t z^2 S_{\rw_t}(z)^2}}.
\end{gather*}
This is written as
\begin{gather*}
t^2 z^3 S_{\rw_t}(z)^4
-z \big\{2 t z + t + a^2\big\} S_{\rw_t}(z)^2+(z+1)=0,
\end{gather*}
which is solved by (\ref{eqn:S_wa}).

\subsection{Proof of Proposition \ref{thm:ma}}
%\label{sec:proof_cor_ma}

It is easy to verify that
\begin{gather*}
R_{\rmm_{\lambda, t}^0}(z) =\frac{\lambda t z}{1-tz}
=\lambda \bigg( \frac{1}{1-tz} -1 \bigg),
\\
R_{\delta_b}(z) =b z.
\end{gather*}
Then (\ref{eqn:R_ma}) is immediately concluded
from Theorem \ref{thm:main}$(ii)$.
We can see that
$S_{\delta_b}(z)=1/b$.
Then (\ref{eqn:S_time_Wishart})
of Theorem \ref{thm:main2}$(iii)$ gives
\begin{gather*}
\frac{S_{\rmm_{\lambda, t}}(z)}
{(1-t z S_{\rmm_{\lambda, t}}(z))
\{ 1-t (\lambda + z) S_{\rmm_{\lambda, t}}(z) \}
}
=\frac{1}{b}.
\end{gather*}
This is written as
\begin{gather*}
t^2 z(z+\lambda) S_{\mu_t}(z)^2
-(2tz+t \lambda+b) S_{\mu_t}(z) +1=0.
\end{gather*}
which is solved by (\ref{eqn:S_ma}).

\section{Concluding remarks}\label{sec:remarks}

We list out some concluding remarks.

$1.$
In addition to the free Brownian motion \cite{Bia97}
and the free Wishart process \cite{CDM05}, Demni introduced
the \textit{free Jacobi process}
in \cite{Dem08}.
This process has two parameters
$\lambda$ and $\theta$.
He derived the following PDE for the
Cauchy transform of the measure-valued process
$(\rk_{\lambda, \theta, t})_{t \geq 0}$,
\begin{align}
\frac{\partial G_{\rk_{\lambda, \theta, t}}(z)}{\partial t}
&+[2 \lambda \theta z(1-z) G_{\rk_{\lambda, \theta, t}}(z)
+ \{ (2 \lambda \theta -1) z + \theta(1-\lambda) \}
] \frac{\partial G_{\rk_{\lambda, \theta, t}}(z)}{\partial z}
\nonumber\\
&+\{ \lambda \theta (1-2z) G_{\rk_{\lambda, \theta, t}}(z)
+(2 \lambda \theta-1) \} G_{\rk_{\lambda, \theta, t}}(z)=0.
\label{eqn:G_Jacobi}
\end{align}
Demni showed that this equation has
the \textit{stationary measure} given by
\begin{gather*}
\rk_{\lambda, \theta}({\rm d}x)
=\max \bigg(0, 1-\frac{1}{\lambda} \bigg) \delta_0({\rm d}x)
+\max \bigg( 0, 1-\frac{1-\theta}{\lambda \theta} \bigg) \delta_1({\rm d}x)
+g(x) 1_{[x_-, x_+]}(x)\, {\rm d}x,
\end{gather*}
where
\begin{gather*}
g(x)= \frac{\sqrt{(x-x_-)(x_+-x)}}{2 \lambda \theta \pi x (1-x) }
\qquad \text{with} \quad
x_{\pm}=\big(\sqrt{\theta(1-\lambda \theta)}
\pm \sqrt{\lambda \theta(1-\theta)} \big)^2.
\end{gather*}
The distribution with the density $g(x)$ is known
as the \textit{Kesten--McKay law} \cite{Kes59,McK81}.
Is it possible to solve the initial-value problem
for (\ref{eqn:G_Jacobi}) as we did in this paper?
When $\lambda=1$ and $\theta=1/2$,
(\ref{eqn:G_Jacobi}) is much simplified and given by
\begin{gather*}
\frac{\partial G_{\rk_{1, 1/2, t}}(z)}{\partial t}
+\frac{\partial}{\partial z}
\bigg\{ \frac{1}{2} z(1-z) G_{\rk_{1, 1/2, t}}(z)^2 \bigg\}=0.
\end{gather*}
This equation was solved by Demni, Hamdi, and Hmidi \cite{DHH12}
when the initial probability measure is given by $\delta_1$
and by Izumi and Ueda for general initial probability measure \cite{IU15}.
In these papers, the solutions are related with
the \textit{free unitary Brownian motion} \cite{Bia97}.
See also \cite{Dem17,Ham18} for further study.

Another PDE for measure-valued process
was reported in \cite{TT21}
\begin{align*}
\frac{\partial G_{\rt_{\alpha, c, t}}(z)}{\partial t}
&- \big\{ 2cz G_{\rt_{\alpha, c, t}}(z)+(z+2-\alpha) \big\}
\frac{\partial G_{\rt_{\alpha, c, t}}(z)}{\partial z}
-z \frac{\partial^2 G_{\rt_{\alpha, c, t}}(z)}{\partial z^2}
\nonumber\\
&- \big\{ c G_{\rt_{\alpha, c, t}}(z) +1 \big\} G_{\rt_{\alpha, c, t}}(z)=0,
\end{align*}
where $\alpha$ and $c$ are positive parameters.
This describes the hydrodynamic limit of the
Bru--Wishart (Laguerre) process
in a high temperature regime (see also \cite{NTT21}).
Notice that this equation involves a second-order derivative of
$G_{\rt_{\alpha, c, t}}(z)$ and hence it is regarded as
a \textit{viscous} Burgers-type equation.
As proved by \cite{Cha92,RS93}, the hydrodynamic limit
of Dyson's Brownian motion model is described by
the \textit{inviscid} Burgers equation (\ref{eqn:Gwt}).
In this paper we have studied only such inviscid cases
of Burgers-type equations
(see Theorem \ref{thm:Burgers} in Section \ref{sec:hydro}).
As shown by \cite{BBCL99,CL97,CL01}, however,
if we consider the system of SDEs which have the same
drift terms with Dyson's Brownian motion model
(\ref{eqn:Dyson1}) but the martingale terms are replaced as
${\rm d} B_t^i \to \sqrt{N} {\rm d}B_t^i$
(compare~\cite[equation~(5.93)]{CL97}
with~\cite[equation~(7)]{RS93}),
then we obtain the viscous Burgers equation in the hydrodynamic
limit (see~\cite[equation~(5.98)]{CL97}).
How can we solve such
viscous Burgers-type equations?

$2.$
Forrester and Grela \cite{FG16} studied
the hydrodynamic limits of the
\textit{circular ensemble} as well as
the Jacobi ensemble.
In the former case, they considered the following
type of Cauchy transform,
\[
{G^{\circ}_{\mu}(z)
=\frac{1}{2} \oint\cot \bigg( \frac{z-x}{2} \bigg)\mu({\rm d}x)}.
\]
This seems to be a \textit{trigonometric extension}
of the usual Cauchy transform (\ref{eqn:Cauchy1}),
since if we introduce a parameter $r>0$, then we see
$(1/2r) \cot ((z-x)/2r) \to 1/(z-x)$
as $r \to \infty$.
See also \cite{CL01,HS21}.
Some \textit{elliptic extensions}
of Cauchy-type transform
have been also considered in a recent study
of elliptic integrable systems \cite{BKL22,BLL21}.
Is it meaningful to consider
trigonometric and elliptic extensions of
free probability theory?

$3.$
In the present paper, we have studied
the complex Burgers-type equations
which are obtained in the hydrodynamic limits
of the stochastic log-gases studied in random
matrix theo\-ry~\cite{For10,Kat16}.
Recently, one of the present authors and Koshida
proposed a new construction of the
\textit{multiple
Schramm--Loewner evolutions} (SLEs)
driven by stochastic log-gases using the notion of
the \textit{coupling} between the multiple SLEs and
the \textit{Gaussian free fields} \cite{KK20,KK21a,KK21b}.
Then the infinite-slit limits of
the Loewner equations
studied by \cite{dMHS18,dMS16,HS21}
in the case that the slits are growing simultaneously
can be interpreted as the hydrodynamic limits
of the multiple SLEs~\cite{HK18} in the same context of
stochastic processes as explained in Section~\ref{sec:hydro}.
In the study of the hydrodynamic limits of the multiple SLEs,
exact solutions of the complex Burgers-type equations
for interesting initial conditions is very important~\cite{HK18}.
On the other hand, new connections between the
free probability theory and the Loewner chains
have been reported~\mbox{\cite{FHS20,HS21}}.
Moreover, an interesting discussion was given such that
the complex Burgers-type equations themselves can be
regarded as Loewner equations for certain
subordination processes~\cite{HS21}.
The method to solve the initial-value problems
for the complex Burgers-type equations
by deriving the functional equations
reported in this paper
shall be developed to analyze the hydrodynamic limits
of multiple SLEs.

\subsection*{Acknowledgements}

The present authors would like to thank
Shinji Koshida and Yoshimichi Ueda
for useful comments on the manuscript.
They are grateful to the anonymous referees for
valuable suggestions for future studies on this subject.
MK was supported by
the Grant-in-Aid for Scientific Research~(C) (No.~19K03674),
(B) (No.~18H01124),
(S) (No.~16H06338),
(A) (No.~21H04432)
of Japan Society for the Promotion of Science.
NS was supported by
the Grant-in-Aid for Scientific Research
(B) (No.~19H01791)
and
(C)(No.~19K03515)
of Japan Society for the Promotion of Science.

\pdfbookmark[1]{References}{ref}
\LastPageEnding

\end{document}